\newtheorem{theorem}{Theorem}
\newtheorem{definition}[theorem]{Definition}
\newtheorem{corollary}[theorem]{Corollary}
\newtheorem{lemma}[theorem]{Lemma}
\newtheorem{remark}{Remark}
\def\qed{\hbox{${\vcenter{\vbox{		 
   \hrule height 0.4pt\hbox{\vrule width 0.4pt height 6pt
   \kern5pt\vrule width 0.4pt}\hrule height 0.4pt}}}$}}
\def\cF{\mathcal F}
\def\bC{\mathbb C}
\def\bE{\mathbb E}
\def\bN{\mathbb N}
\def\bP{\mathbb P}
\def\bR{\mathbb R}
\def\uh{\underline{h}}
\def\uu{\underline{u}}
\def\uz{\underline{z}}
\def\uu{\underline u}
\begin{document}

\title{An It\^o calculus for a class of limit processes arising from random walks on the complex plane}

\author[tn]{Stefano Bonaccorsi}
\ead{stefano.bonaccorsi@unitn.it}

\author[msu]{Craig Calcaterra}
\ead{craig.calcaterra@metrostate.edu}

\author[tn]{Sonia Mazzucchi}
\ead{sonia.mazzucchi@unitn.it}

\address[tn]{Dipartimento di Matematica, Universit\`a di
    Trento, via Sommarive 14, 38123 Povo (Trento), Italia
}

\address[msu]{Applied Mathematics Department, 
Metropolitan State University, 700 E. Seventh St. 
St. Paul MN  55106-5000, USA
}

\begin{keyword}
{generalized It\^o calculus, probabilistic representation of solutions of PDEs, stochastic processes on the complex plane.

\MSC{35C15 \sep 60G50  \sep 60G20 \sep 60F05}
}
\end{keyword}

\begin{abstract}
Within the framework of the previous paper \cite{Bonaccorsi2015}, we develop a generalized stochastic calculus for processes associated to higher order diffusion operators.
Applications to the study of a Cauchy problem, a Feynman-Kac formula and a representation formula for higher derivatives of analytic functions are also given.
\end{abstract}

\maketitle

\def\uh{\underline{h}}
\def\uu{\underline{u}}
\def\uz{\underline{z}}


\section{Introduction}
One of the main instances of the fruitful interplay between analysis and probability is the connection between parabolic equations associated to second-order elliptic operators and the theory of Markov processes. 
The main consequence of this extensively studied topic is  the famous  {\em Feynman-Kac formula}, providing a representation of the solution of the heat equation with potential $V\in C_0^\infty (\bR^d)$ (the continuous functions vanishing at infinity)
\begin{equation} \label{heat}\left\{ \begin{array}{l}
\frac{\partial}{\partial t}u(t,x)=\frac{1}{2}\Delta u(t,x) -V(x)u(t,x),\qquad t\in \bR^+, x\in\bR^d\\
u (0,x)=f(x)\\
\end{array}\right. \end{equation}
in terms of an integral with respect to the distribution of the Wiener process, the mathematical model of the Brownian motion:
\begin{equation}\label{Fey-Kac0}
u (t,x)=\int_{C_t} e^{-\int_0^tV(\omega(s)+x)ds}f( \omega(t)+x) \, {\rm d}W(\omega).\end{equation}
In fact a probabilistic representation of this form cannot be written in the case of semigroups whose generator does not satisfy the maximum principle. In particular
if the Laplacian in Eq \eqref{heat} is replaced with an higher order differential operator, i.e. if we consider a Cauchy problem of the form
\begin{equation} \label{heatN}\left\{ \begin{array}{l}
\frac{\partial}{\partial t}u(t,x)=(-1)^{n+1} \Delta^n u(t,x) -V(x)u(t,x),\quad t\in \bR^+, x\in\bR^d, \\
u (0,x)=f(x),\\
\end{array}\right. \end{equation}
with $n\in\bN$, $n\geq 2$, 
then a formula analogous to \eqref{Fey-Kac0},  giving the solution of \eqref{heatN} in terms of the expectation with respect to the measure associated to a Markov process, is lacking and it is not possible to find a stochastic process  which plays for the parabolic equation \eqref{heatN} the same role that the Wiener process plays for the heat equation. 
The problem of how to overcome this limitation  has been studied by means of different techniques and two main approaches have been proposed. The first one was introduced by V. Yu. Krylov in 1960 \cite{Kry} and further developed by K. Hochberg in 1978 \cite{Hoc78}. The  solution of \eqref{heatN} is constructed in terms of the expectation with respect to a {\em signed} measure with infinite total variation on a space of  paths on the interval $[0,t]$.  This approach is related to the  theory of {\em pseudoprocesses}, i.e. processes associated to signed instead of probability measures. 
It is important to recall that  due to the particular conditions necessary for the  generalization of the Kolmogorov existence theorem for the  limit of a projective system of complex measures (see \cite{Thomas}), in the case of Krylov-Hochberg process, a well defined signed measure on $\bR^{[0,t]}$ cannot exist and the "integrals" realizing the Feynman-Kac formula for equation \eqref{heatN} are just formal expressions which cannot make sense in the framework of Lebesgue integration theory but are to be meant as limit of a particular approximating sequence. However, even taking into account these technical problems, an analog of the arc-sine law \cite{Hoc78,HocOr94,Lachal}, of the  central limit theorem \cite{Hoc79,SmoFa} and of It\^o formula and Ito stochastic calculus \cite{Hoc78,Ni} have been developed for the (finite additive)  Krylov-Hochberg signed measure. 
For a extensive discussion of these problems in the framework of a generalized integration theory on infinite dimensional spaces as well as for a unified view of probabilistic and complex integration see \cite{AlMa14,Albeverio2015}. 
It is worthwhile to mention  the work by D. Levin and T. Lyons \cite{LevLyo} on rough paths, conjecturing that the above mentioned  signed measure could be finite if defined on the quotient space of equivalence classes of paths corresponding to different parametrization of the same path.

A different approach, introduced by T. Funaki \cite{Fu} for the case where $n=2$,   is based on the construction of a stochastic process (with dependent increments) on the complex plane. Funaki's process is obtained by composing two independent Brownian motions and has some relations with the {\em iterated Brownian motion} \cite{Bur,Allouba2002}. Furthermore this approach is related with the theory of Bochner subordination \cite{Boch} and can be applied to  partial differential equations of even and odd order $2^n$, by multiple iterations of suitable processes \cite{Fu,HocOr96,OrZha,OrDOv}.
Complex valued processes, connected to PDE of the form \eqref{heatN} have been also proposed by other authors by means of different techniques \cite{Leandre}.
In \cite{MadRyb93,BurMan96} K. Burdzy and A. Madrecki consider equation \eqref{heat-N} with $n=2$  and $V\equiv 0$,  constructig a  probabilistic representation for its solution in terms of a  stable probabilistic Borel measure $m$ on the space $\Omega=C([0,t],\bC^\infty)$ of continuous mappings on $[0,t]$ with values in the set $\bC^\infty$ of complex valued sequences, endowed with the product topology. In this setting a Feynman-Kac type formula is  proved for the fourth order heat-type equation with   linear potential.
$ \frac{\partial u}{\partial t}=\frac{1}{8}\frac{\partial^{4} u}{\partial x^{4}}+(iax+b)u.$
By means of the theory of infinite dimensional Fresnel integral, in \cite{Ma14} a Feynman-Kac-type formula for equation \eqref{heatN} has been proved for  potentials $V$ which are Fourier transform of complex bounded measures on the real line.

In a recent paper \cite{Bonaccorsi2015}, two of the authors have introduced a different approach which is rather simple and elegant, providing the solution  of the parabolic Cauchy problem
\begin{equation}\label{e:in1}
\begin{cases}
\partial_t u(t,x) = \frac{\alpha}{N!} \, \partial^N_x u(t,x),
\\
u(0,x) = f(x), \qquad x \in \bR,
\end{cases}
\end{equation}
(where $N > 2$ is an integer constant, $\alpha \in \bC$  and $f: \bR \to \bC$ is the initial datum) in terms of the expectation with respect to a the law of a sequence of random walks $\{W_n^N,\ n \in \bN\}$ on the complex plane. The main idea in \cite{Bonaccorsi2015} is the construction of a sequence of processes  which play for the PDE \eqref{e:in1} of order $N$ the same role that the Wiener process plays for the heat equation. 
Let $(\Omega, \cF, \bP)$ be a probability space,  $\alpha \in \bC \setminus \{0\}$ be a complex number and $N \geq 1$ a given integer.
Let 
\begin{align*}
R(N)= \{e^{2 i \pi k/N},\ k=0,1,\dots, N-1\}
\end{align*} 
be the roots of unity.
Throughout the paper $\xi$ will denote a uniformly distributed random variable on the set $\alpha^{1/N} R(N)$.
Given a sequence $\{\xi_k\}$ of i.i.d.\ random variables $\xi_k \sim \xi$, we define the complex stochastic process
\begin{align}\label{defWn}
W^n(t) = \frac{1}{n^{1/N}} \sum_{k=1}^{\lfloor nt \rfloor} \xi_k.
\end{align}
For each $n$, the process $W^n$ is a random walk on the complex plane, which is geometrically aligned and precisely scaled to extract information about the $N$-th complex derivative of analytic functions \cite{Bonaccorsi2015}. 
This claim will be made clear in the course of the paper.

If $\alpha =1$ and $N=2$ the process $W_n$ converges weakly  \cite{Bil} to the Wiener process and the Feynman-Kac formula \eqref{Fey-Kac0} (for $V\equiv 0$) can be written as
\begin{equation}\label{Fey-Kac0bis}
u(t,x)=\lim_{n\to \infty} \bE\left[ f(W_n(t)+x)\right].
\end{equation}
For $N>2$ the sequence of  processes $W_n$ cannot  converge  because of the particular scaling exponent $1/N$ (directly related with the order of the PDE \eqref{e:in1}) appearing in the denominator on the right hand side of \eqref{defWn}. However for a restricted class of functions $f:\bC\to \bC$ the limit in \eqref{Fey-Kac0bis} still exists providing a representation for the solution of \eqref{e:in1} (see \cite{Bonaccorsi2015}).

Notice that $W^n$ are in fact pre-Brownian motions, rescaled so they diverge in the limit due to the factor $n^{-1/N}$ when $N > 2$.
Asymptotically their paths have the geometric properties of Brownian motion. To see this, we calculate the real $2$-dimensional variance
\begin{align*}
\bE \left( |W^n(t) - \bE W^n(t)|^2 \right) = \bE|W^n(t)|^2 = \bE \left( \left|n^{-1/N} \sum_{k=1}^{\lfloor n t \rfloor} \xi_k \right|^2 \right)
= n^{-2/N} \left(\sum_{k=1}^{\lfloor n t \rfloor} \bE |\xi_k|^2 \right) = \frac{{\lfloor n t \rfloor}}{n^{2/N}}
\end{align*}
Thus if we reparametrize to the process $\bar W^n(t) = W^n(t/n^{1-2/N})$ the random walk $\bar W^n(t)$ has 
real 2-dimensional variance $\frac{{\lfloor n^{2/N} t \rfloor}}{n^{2/N}}$ and so, by Donsker's Theorem, 
converges to a 2-dimensional Brownian motion
as $n \to \infty$ for $N > 2$.
In this sense (since paths do not depend on the parametrisation of the curves) the paths of $W^n$ 
have the geometric properties of paths of Brownian motion in the limit as $n \to \infty$.

\medskip


In this paper, we improve the construction of \cite{Bonaccorsi2015} with the development of an It\^o's calculus for 
the limit law of the random walks $W^n$. To achieve our aim,
we first analyze the behaviour of the random walks $W^n$, with particular regard to the estimates on the moments and Fourier's transform of $W^n$.
\\
Then, we introduce the analog of It\^o's calculus. Heuristically, the rescaling $n^{1/N}$ in the construction of $W^n$ (by comparing with the classical construction, that
correspond to the case $N=2$)
implies that only the $N$-th moment behaves like ${\rm d}t$ in the limit, thus
implying that all moments of lesser order shall be considered in the It\^o formula,
while we can neglect moments of higher order in the limit.
\\
Moreover, integrals with respect to the increments of the random walk $W^n$ and their moments lesser than $N$
shall be analogous to {\em stochastic} integrals and, consequently, shall have zero mean.
In Theorem \ref{t:main-claim} we prove that this intuition is indeed true, and we prove 
the following It\^o formula (compare \eqref{eq:unify1})
\begin{align*}
\lim_{n \to \infty} \bE[f(z + W^n(t))] - f(z) = \frac{\alpha}{N!}  \int_0^t \lim_{n \to \infty} \bE[\partial^{N} f(z + W^n(s))] \, {\rm d}s.
\end{align*}
By setting
\begin{align*}
u(t,z) = \lim_{n \to \infty} \bE[f(z + W^n(t))],
\end{align*}
this is equivalent to saying $u(t,z)$ is a classical solution of the $N$-th order Cauchy problem \eqref{e:in1}.

We shall then extend this result in two directions. First, through a rather straightforward extension of previous computations,
we allow for a time-dependent coefficient in front of the diffusion operator, constructing the probabilistic representation for the solution of \begin{equation}
\begin{cases}
\partial_t u(t,x) = \frac{\alpha}{N!} a(t) \partial^N_x u(t,x),
\\
u(0,x) = f(x), \qquad x \in \bR.
\end{cases}
\end{equation}
Then we begin the analysis of a Feynman-Kac formula, and we show that for a time dependent potential which is linear in the space variable, the classical solution of the initial value problem
\begin{equation*}
\begin{cases}
\frac{\partial}{\partial t}u(t,z) = \frac{\alpha}{N!}\partial_x^{N} u(t,z)+V(t,x)u(t,x),
\\
u(0,x) = f(x), \qquad x \in \bR.
\end{cases}
\end{equation*}
is given by 
$$u(t,x)=\lim_{n\to\infty}\bE\left[ f(x+W^n(t))e^{\int_0^t V(t-s, x+W^n(s))ds}\right]$$
All these results require to choose a suitable class of functions for $f$. The first, obvious remark,
is $f$ must be extensible to the complex plane. Moreover, it is necessary to have estimates on the function and all its derivatives in $\bC$.
For the sake of simplicity, we shall limit ourselves to the classical case of analytic functions of exponential type, see for instance \cite{Carmichael1934} and definition \ref{def-exp}.\\
Finally we study the properties of stopping time for the process $W^n$, proving a suggestive formula for the $N$-th order derivative of an analytic function 
$$f^N(z)=\frac{N!}{\alpha}\lim_{n\to \infty}\bE\left[\frac{1}{\tau_n}\left(f(z+W^n(\tau_n))-f(z)\right)\right]$$
where $\tau_n$ is the exit time of $W^n$ from the ball $B(0,R)\subset \bC$.



\section{Random walk on the complex plane}\label{sez1}

The present section is devoted to the proof of some properties of the sequence of complex random walks $W^n(t)$.

Let $\alpha \in \bC$ and $N\in \bN$ with $N>2$ and let us consider the complex random variable $\xi$ uniformly distributed on the set $\alpha R(N)$, where $R(N)=\{e^{i2\pi k/N}, k=0, 1,...,N-1\}$ is the set of $N-$roots of unity.
We have
\begin{equation}
\label{e2}
\bE[f(\xi)] = \frac{1}{N} \, \sum_{k=0}^{N-1} f(\alpha^{1/N} e^{2 i \pi k/ N}).
\end{equation}

\begin{lemma}\label{lp1}
The random variable $\xi$ has finite powers of every order
\begin{equation}
\label{e3}
\bE[\xi^m] =
\begin{cases}
\alpha^{m/N}, & m = n N,\ n \in \bN,
\\
0, & \text{otherwise}
\end{cases}
\end{equation}
and finite moments of every order
\begin{align*}
\bE[|\xi|^m] = |\alpha|^{m/N}.
\end{align*}
\end{lemma}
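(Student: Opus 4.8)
The plan is to compute the relevant expectation directly from the defining formula \eqref{e2}, using the explicit description of $\xi$ as uniform on $\alpha^{1/N} R(N)$, and then to reduce everything to the elementary fact that a sum of all $N$-th roots of unity raised to a power $j$ vanishes unless $N \mid j$. First I would write, for any $m \in \bN$,
\begin{align*}
\bE[\xi^m] = \frac{1}{N} \sum_{k=0}^{N-1} \bigl(\alpha^{1/N} e^{2 i \pi k/N}\bigr)^m = \frac{\alpha^{m/N}}{N} \sum_{k=0}^{N-1} e^{2 i \pi k m/N} = \frac{\alpha^{m/N}}{N} \sum_{k=0}^{N-1} \bigl(e^{2 i \pi m/N}\bigr)^k.
\end{align*}
The inner sum is a finite geometric series in the ratio $\zeta := e^{2 i \pi m/N}$. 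If $N \mid m$, then $\zeta = 1$ and the sum equals $N$, giving $\bE[\xi^m] = \alpha^{m/N}$; writing $m = nN$ this is $\alpha^{n}$, matching the claimed value $\alpha^{m/N}$. If $N \nmid m$, then $\zeta \neq 1$ but $\zeta^N = e^{2 i \pi m} = 1$, so the geometric sum telescopes to $(\zeta^N - 1)/(\zeta - 1) = 0$, giving $\bE[\xi^m] = 0$. This establishes \eqref{e3}.

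For the moment statement, the computation is even more immediate: since $|\alpha^{1/N} e^{2 i \pi k/N}| = |\alpha|^{1/N}$ for every $k$, the modulus $|\xi|$ is deterministic and equal to $|\alpha|^{1/N}$, so $|\xi|^m = |\alpha|^{m/N}$ almost surely and hence $\bE[|\xi|^m] = |\alpha|^{m/N}$. The finiteness of all powers and moments is then trivial, $\xi$ being a bounded (indeed finitely supported) random variable.

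There is essentially no obstacle here: the only point requiring a moment's care is the interpretation of the fractional power $\alpha^{1/N}$ (a branch must be fixed, as is implicit in the paper's notation $\alpha^{1/N} R(N)$) and the consistency of writing $\alpha^{m/N}$ for $(\alpha^{1/N})^m$; once a branch of $\alpha^{1/N}$ is fixed the identity $(\alpha^{1/N})^{nN} = \alpha^n$ holds on the nose, so no multivaluedness issue arises. I would therefore expect the proof to be a short direct calculation, with the root-of-unity cancellation as its only substantive ingredient.
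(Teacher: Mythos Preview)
Your proof is correct and follows essentially the same approach as the paper: both compute $\bE[\xi^m]$ directly from the averaging formula and reduce the case $N \nmid m$ to the standard geometric-sum identity $\sum_{k=0}^{N-1} e^{2i\pi mk/N} = (1 - e^{2i\pi m})/(1 - e^{2i\pi m/N}) = 0$. Your write-up is in fact slightly more complete, since you also dispatch the moment claim $\bE[|\xi|^m] = |\alpha|^{m/N}$ (which the paper's proof leaves implicit) and comment on the branch of $\alpha^{1/N}$.
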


\begin{proof}
We compute
\begin{align*}
\bE[\xi^m] = \frac{1}{N} \sum_{k=0}^{N-1} \alpha^{m/N} e^{2 i \pi m k/N};
\end{align*}
if $m/N = n \in \bN$, since $e^{2 i \pi n} = 1$ then each term in the sum is equal to 1;
in the other case, we employ the trigonometric sum
\begin{align*}
\sum_{k=0}^{N-1}  e^{2 i \pi m k/N} = \frac{1 - e^{2 i \pi m}}{1 - e^{2 i \pi m/N}} = 0.
\end{align*}
\end{proof}

For a complex random variable $X$ we define its {\em characteristic function}\footnotemark as
$\psi_X(\lambda) := \bE[e^{i \lambda X}]$. We have
\begin{align*}
\psi_\xi(\lambda) := \bE[e^{i \lambda \xi}] = \frac{1}{N} \, \sum_{k=0}^{N-1} \exp(i\alpha^{1/N} e^{2 i \pi k/ N}).
\end{align*}
\footnotetext{
Though it is more common to use $\bE[\exp\left(i {\rm Re}\lambda X\right)]$ as the characteristic function of a complex-valued random variable due to the connection with the 2-dimensional Fourier
transform, the complication would not improve the results in this paper
}

The next lemma, which will be applied in section \ref{sez-FK},  provides an estimate of the characteristic function of $\xi$.
\begin{lemma}\label{stime-chi}
Let $|\lambda|\leq R$. Then there exists a constant $C\in\bR$ such that 
$$
\left|\bE[e^{\lambda \xi}]-e^{\frac{\alpha\lambda^N}{N!}} \right| \leq C|\alpha|^2|\lambda|^{2N}.$$
\end{lemma}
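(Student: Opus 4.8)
The plan is to expand both sides of the claimed inequality as power series in $\lambda$ and to compare them coefficient by coefficient; since $\xi$ has moments of all orders (Lemma~\ref{lp1}), this will be entirely legitimate.

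First I would write $\bE[e^{\lambda \xi}] = \sum_{m \geq 0} \frac{\lambda^m}{m!}\, \bE[\xi^m]$, the interchange of sum and expectation being justified by $\sum_{m \geq 0} \frac{|\lambda|^m}{m!}\, \bE[|\xi|^m] = \exp(|\alpha|^{1/N}|\lambda|) < \infty$, again from Lemma~\ref{lp1}. By the power formula \eqref{e3} only the exponents $m = nN$ survive, with $\bE[\xi^{nN}] = \alpha^n$, so $\bE[e^{\lambda \xi}] = \sum_{n \geq 0} \frac{\alpha^n \lambda^{nN}}{(nN)!}$. On the other hand $e^{\alpha \lambda^N/N!} = \sum_{n \geq 0} \frac{\alpha^n \lambda^{nN}}{n!\,(N!)^n}$. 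The key observation is that the first two terms of these two series coincide: for $n=0$ both equal $1$, and for $n=1$ both equal $\alpha \lambda^N/N!$. Hence
\[
\bE[e^{\lambda \xi}] - e^{\alpha \lambda^N/N!} \;=\; \sum_{n \geq 2} \alpha^n \lambda^{nN} \left( \frac{1}{(nN)!} - \frac{1}{n!\,(N!)^n} \right),
\]
which manifestly starts at order $\lambda^{2N}$ — the analytic reflection of the precise calibration of $\xi$.

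It then remains to bound this tail. Because $\frac{(nN)!}{n!\,(N!)^n}$ is a positive integer (it counts the partitions of $nN$ objects into $n$ unordered blocks of size $N$), we have $(nN)! \geq n!\,(N!)^n$, so each bracket above is nonnegative and bounded by $\frac{1}{n!\,(N!)^n}$. Using $\frac{1}{n!} \leq \frac{1}{2\,(n-2)!}$ for $n \geq 2$ together with $|\lambda| \leq R$, the tail sums to at most $\frac{|\alpha|^2 |\lambda|^{2N}}{2\,(N!)^2}\exp\big(|\alpha| R^N/N!\big)$, which gives the statement with $C := \frac{1}{2\,(N!)^2}\exp\big(|\alpha| R^N/N!\big)$, a constant depending only on $N$, $R$ and $\alpha$.

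I do not expect any real obstacle here: the proof is a direct power-series manipulation, and the only points needing (minor) care are the justification of the term-by-term operations, all supplied by the absolute convergence above. One can make the bookkeeping cleaner still by first substituting $\mu := \alpha^{1/N}\lambda$, which reduces everything to a random variable uniform on $R(N)$ and leaves $\alpha$ entering only through $|\mu|^{2N} = |\alpha|^2 |\lambda|^{2N}$ and the estimate $|\mu|^N \leq |\alpha| R^N$.
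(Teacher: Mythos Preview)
Your argument is essentially the same as the paper's: both expand $\bE[e^{\lambda\xi}]$ and $e^{\alpha\lambda^N/N!}$ as power series in $\lambda$, observe that the terms with $n=0$ and $n=1$ agree, and then bound the tail starting at $n=2$ by a quantity of the form $C|\alpha|^2|\lambda|^{2N}$. The paper simply factors the tail as $\alpha^2\lambda^{2N}g(\lambda)$ for an entire $g$ and takes $C=\sup_{|\lambda|\le R}|g(\lambda)|$; you instead bound the tail explicitly via the multinomial inequality $(nN)!\ge n!(N!)^n$ and obtain the concrete constant $C=\frac{1}{2(N!)^2}\exp(|\alpha|R^N/N!)$, which is a small bonus.

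One cosmetic slip: since $(nN)!\ge n!(N!)^n$, the brackets $\tfrac{1}{(nN)!}-\tfrac{1}{n!(N!)^n}$ are non\emph{positive}, not nonnegative as you wrote; the absolute-value bound $\bigl|\tfrac{1}{(nN)!}-\tfrac{1}{n!(N!)^n}\bigr|\le \tfrac{1}{n!(N!)^n}$ is of course unaffected.
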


\begin{proof}
For the properties of the random variable $\xi$, setting $z_j:=e^{i2\pi j/N}$, the function $\chi:\bC\to\bC$ defined as $\chi (\lambda):= \bE[e^{\lambda \xi}]$ is an entire analytic function with the following power series expansion: 
\begin{eqnarray*}
\chi(\lambda)&=&\frac{1}{N}\sum_{j=0}^{N-1}e^{\alpha^{1/N}\lambda z_j}=\frac{1}{N}\sum_{j=0}^{N-1}\sum_k\frac{\lambda^k\alpha^{k/N}}{k!}z_j^k\\
&=&\sum_k\frac{\lambda^k\alpha^{k/N}}{k!}\sum_{j=0}^{N-1}\frac{z_j^k}{N}\\
&=&\sum_{m}\frac{\lambda^{mN}\alpha^{m}}{(mN)!}.
\end{eqnarray*}
The difference between $\chi(\lambda)$ and $e^{\frac{\alpha\lambda^N}{N!}}$ can be estimated as:
\begin{eqnarray*}
\chi(\lambda)-e^{\frac{\alpha\lambda^N}{N!}}&=&\sum_{m=0}^\infty\lambda^{mN}\alpha^{m}\left(\frac{1}{(mN)!}-\frac{1}{m!(N!)^m}\right)\\
&=&\sum_{m=2}^\infty\lambda^{mN}\alpha^{m}\left(\frac{1}{(mN)!}-\frac{1}{m!(N!)^m}\right)\\
&=&\alpha^2\lambda^{2N}\sum_{m=0}^\infty\lambda^{mN}\alpha^{m}\left(\frac{1}{((m+2)N)!}-\frac{1}{(m+2)!(N!)^{m+2}}\right)\\
&=&\alpha^2\lambda^{2N} g(\lambda)
\end{eqnarray*}
where $g:\bC\to\bC$ is the entire analytic function defined by the power series (with infinite radius of convergence)
$$g(\lambda):=\sum_{m=0}^\infty\lambda^{mN}\alpha^{m}\left(\frac{1}{((m+2)N)!}-\frac{1}{(m+2)!(N!)^{m+2}}\right)$$
The thesis follows by the continuity of $g$ and the assumption of the boundedness of $|\lambda|$, by putting 
$$C:=\sup_{|\lambda|<R}|g(\lambda)|.$$
\end{proof}

Next we proceed to analyze the random walk $W^n$ on the complex place defined by formula \eqref{defWn}.
The main issue here is the analysis of the complex moments of the random walk and their asymptotic behavior as $n \to \infty$. 
The result that we obtain in Theorem \ref{t:unif}, is necessary in order to handle the It\^o's formula introduced in the next section.
\begin{theorem}
\label{t:unif}
Fix $k \in \bN$, $t\in \bR^+$. 
The $k$-moment of $W_n(t)$ satisfies
\begin{align*}
\bE[(W_n(t))^k] =
\begin{cases}
\left(\frac{\alpha t}{N!}\right)^{k/N} \frac{k!}{(k/N)!}{\mathds 1}_{[0,\lfloor nt\rfloor]}(k/N) + R(n,k),  & k = h N,\ h \in \bN, \, 
\\
0, & \text{otherwise}
\end{cases}
\end{align*}
(${\mathds 1}_{[0,\lfloor nt\rfloor]]}$ being the indicator function of the interval $[0,\lfloor nt\rfloor]$).
For $k=0$ and $k=N$ then $R(n,hN) =0$, while $k=hN$, $h\in \bN$, $h\geq 2$, then the remainder term satisfies the inequality 
\begin{equation}\label{in-momenti}
|R(n,hN)|\leq \frac{|\alpha|^ht^{h-1}(h^2+h)}{2n}+\frac{|\alpha|^h}{n}\left(\frac{0.792hN}{\log(hN+1)}\right)^{hN} .
\end{equation}
\end{theorem}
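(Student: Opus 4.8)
\medskip
\noindent\textbf{Proof strategy.}
The plan is to turn the claim into an estimate for a finite combinatorial sum. Write $m:=\lfloor nt\rfloor$, so that $W^n(t)=n^{-1/N}\sum_{j=1}^m\xi_j$ and $\bE[(W^n(t))^k]=n^{-k/N}\,\bE\bigl[(\sum_{j=1}^m\xi_j)^k\bigr]$. Expanding the $k$-th power as a sum over all maps $\phi\colon\{1,\dots,k\}\to\{1,\dots,m\}$ and grouping them by the partition $\pi$ of $\{1,\dots,k\}$ formed by the level sets of $\phi$, independence gives $\bE[\prod_l\xi_{\phi(l)}]=\prod_{B\in\pi}\bE[\xi^{|B|}]$, and there are exactly $(m)_{|\pi|}:=m(m-1)\cdots(m-|\pi|+1)$ maps realizing a prescribed level-set partition. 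By Lemma~\ref{lp1} the product $\prod_{B\in\pi}\bE[\xi^{|B|}]$ vanishes unless every block of $\pi$ has size divisible by $N$ (which forces $N\mid k$), and otherwise equals $\alpha^{k/N}$. Hence $\bE[(W^n(t))^k]=0$ when $N\nmid k$, while for $k=hN$,
\[
\bE[(W^n(t))^{hN}]=\frac{\alpha^h}{n^h}\sum_{\substack{\pi\,\vdash\,\{1,\dots,hN\}\\ N\mid|B|\ \forall B\in\pi}}(m)_{|\pi|},
\]
which already yields the dichotomy in the statement and reduces everything to estimating this partition sum.

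The next step is to split off the leading term. Any contributing partition has $|\pi|\ge h$, with equality exactly when every block has size $N$; there are $\frac{(hN)!}{h!\,(N!)^h}$ such partitions, each contributing $(m)_h$, so the minimal part of the sum equals $\frac{\alpha^h(hN)!}{h!\,(N!)^h}\cdot\frac{(m)_h}{n^h}$. Since $\bigl(\tfrac{\alpha t}{N!}\bigr)^{k/N}\tfrac{k!}{(k/N)!}=\tfrac{\alpha^h(hN)!}{h!\,(N!)^h}\,t^h$, the main term of the theorem is precisely the $n\to\infty$ value of this minimal part, and it is present only while $h\le m$, which is the role of ${\mathds 1}_{[0,\lfloor nt\rfloor]}(k/N)$. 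The replacement of $\tfrac{(m)_h}{n^h}$ by $t^h$ is routine: from $nt-1<m\le nt$ one has $0\le t^h-\tfrac{(m)_h}{n^h}$, and bounding $m^h-(m)_h\le\binom{h}{2}m^{h-1}$ together with $(nt)^h-m^h\le h(nt)^{h-1}$ gives $t^h-\tfrac{(m)_h}{n^h}\le\tfrac{h^2+h}{2}\,\tfrac{t^{h-1}}{n}$, the source of the first term in \eqref{in-momenti}. For $h=0$ and $h=1$ the partition sum has no non-minimal terms, which accounts for $R(n,0)=R(n,N)=0$.

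Finally one bounds the non-minimal partitions: each has a block of size $\ge 2N$, hence $|\pi|\le h-1$ and $(m)_{|\pi|}\le m^{h-1}\le(nt)^{h-1}$, and there are at most $B_{hN}$ of them ($B_{hN}$ the Bell number of an $hN$-element set). Multiplying by $\alpha^h/n^h$, the whole non-minimal contribution is $\le\frac{|\alpha|^h t^{h-1}}{n}\,B_{hN}$, and inserting the classical bound $B_n<\bigl(0.792\,n/\log(n+1)\bigr)^n$ with $n=hN$ produces the second term of \eqref{in-momenti}; adding the two pieces gives the asserted estimate on $|R(n,hN)|$. The substantive point --- and the main obstacle --- is this last step: one must see that dropping from the minimal number $h$ of blocks to $h-1$ blocks costs exactly one full factor $m\sim nt$, so that the entire non-minimal contribution falls from order $n^{-h}m^h$ to order $n^{-h}m^{h-1}=O(1/n)$, with the multiplicative constant controlled uniformly by the Bell number. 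Equivalently, one can run the computation through the identity $\bE[(W^n(t))^{hN}]=\tfrac{(hN)!}{n^h}\,[x^{hN}]\,\chi(x)^m$ with $\chi(x)=\bE[e^{x\xi}]=e^{\alpha x^N/N!}+\alpha^2x^{2N}g(x)$ from the proof of Lemma~\ref{stime-chi}, expanding $\chi(x)^m$ by the binomial theorem and tracking powers of $m$; the partition split above is the combinatorial shadow of that expansion.
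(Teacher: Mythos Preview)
Your approach is essentially the same as the paper's: they derive the partition expansion via Fa\`a di Bruno applied to the characteristic function $\psi_n(\lambda)=(\psi_\xi(\lambda n^{-1/N}))^{\lfloor nt\rfloor}$, arriving at exactly your formula $\bE[(W^n(t))^{hN}]=\frac{\alpha^h}{n^h}\sum_\pi (m)_{|\pi|}$ over partitions whose block sizes are multiples of $N$, and then split off the $|\pi|=h$ term and bound the rest by the Bell number --- so your direct combinatorial expansion and their characteristic-function route are two packagings of the same computation (as you yourself observe at the end).

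Two small corrections are worth flagging. First, your inequality ``any contributing partition has $|\pi|\ge h$'' is reversed: since the block sizes $l_iN$ satisfy $\sum l_i=h$ with $l_i\ge 1$, one has $|\pi|\le h$, with equality precisely when every block has size $N$; your subsequent use of $|\pi|\le h-1$ for the non-minimal partitions is correct, so this is just a slip. Second, your bound $(m)_{|\pi|}\le(nt)^{h-1}$ yields a non-minimal contribution of $\frac{|\alpha|^h t^{h-1}}{n}B_{hN}$, which carries an extra factor $t^{h-1}$ not present in the second term of \eqref{in-momenti}. The paper instead writes $(m)_{|\pi|}\le n^{h-1}$ to land on $\frac{|\alpha|^h}{n}B_{hN}$; note that this step in the paper is itself only valid for $t\le 1$, so your version is actually the honest one, just not literally matching the stated bound for $t>1$.
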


\begin{proof}
Let $W_n(t)=\frac{1}{n^{1/N}}\sum_j^{\lfloor nt\rfloor}\xi_j$ and $\psi_n$ be its characteristic function, namely:
$$\psi_n(\lambda):=\bE[e^{i\lambda W_n(t)}]$$
We have that 
$$\bE[(W_n(t))^k] =(-i)^k\frac{d^k}{d\lambda^k} \psi_n(0),$$
where $\psi_n$ is equal to
\begin{align*}
\psi_n(\lambda) = \left( \bE[\exp(\frac{1}{n^{1/N}} i \lambda \xi)] \right)^{\lfloor nt\rfloor} = \left( \psi_\xi(\frac{\lambda}{n^{1/N}} ) \right)^{\lfloor nt\rfloor},
\end{align*}
where $\psi_\xi$ is the characteristic function of $\xi$.\\
By Fa\'a di Bruno's formula
\begin{equation}\label{Faa1}
\frac{d^k}{d\lambda^k} \psi_n(\lambda)=\sum_{\pi\in \Pi}C(|\pi|, \lambda)\prod_{B\in \pi}\left(\frac{\psi_\xi^{(|B|)}(\lambda/n^{1/N})}{n^{|B|/N}}\right)
\end{equation}
where 
$\pi$ runs over the set $\Pi$ of all partitions of the set ${1,...,k}$,  $B\in \pi$ means that the variable $B$ runs through  the list of the blocks of the partition $\pi$, $|\pi|$ denotes the number of blocks of the partition $\pi$ and $|B|$ is the cardinality of a set $B$, while the function $C:\bN\times \bR\to\bC$ is equal to
\begin{align*}
C(j,\lambda) =
\begin{cases}
\frac{\lfloor nt\rfloor!}{(\lfloor nt\rfloor-j)!}\left(\psi_\xi(\lambda/n^{1/N})\right)^{\lfloor nt\rfloor-j},  & \lfloor nt\rfloor\geq j \, 
\\
0, & \text{otherwise}
\end{cases}
\end{align*} 
 Formula \eqref{Faa1} can be written in the equivalent form:
\begin{multline}\frac{d^k}{d\lambda^k} \psi_n(\lambda)=
\\\sum \frac{k!}{m_1!m_2! \cdots m_k!}\frac{\lfloor nt\rfloor!}{(\lfloor nt\rfloor-(m_1+m_2+\dots+m_k))!}\left(\psi_\xi(\lambda/n^{1/N})\right)^{\lfloor nt\rfloor-(m_1+m_2+\dots+m_k)}\prod_{j=1}^k\left(\frac{\psi_\xi^{(j)}(\lambda/n^{1/N})}{j!n^{j/N}}\right)^{m_j}\end{multline}
where the sum is over the $k-$ple of non-negative integers $(m_1,m_2,...,m_k)$ such that $m_1+2m_2+\dots +km_k=k$ and $m_1+m_2+...+m_k\leq \lfloor nt\rfloor$.
In particular we have:
\begin{equation}\label{Faa2}
\frac{d^k}{d\lambda^k} \psi_n(0)=\sum_{\pi\in \Pi}\frac{\lfloor nt\rfloor!}{(\lfloor nt\rfloor-|\pi|)!}\prod_{B\in \pi}\left(\frac{\psi_\xi^{(|B|)}(0)}{n^{|B|/N}}\right)
\end{equation}
 where the first sum runs over the partitions $\pi$ such that $|\pi|\leq \lfloor nt\rfloor $ or equivalently
\begin{equation}\label{FaadiBruno}
\frac{d^k}{d\lambda^k} \psi_n(0)=\sum \frac{k!}{m_1!m_2! \cdots m_k!}\frac{\lfloor nt\rfloor!}{(\lfloor nt\rfloor-(m_1+m_2+\dots+m_k))!}
\prod_{j=1}^k\left(\frac{\psi_\xi^{(j)}(0)}{j!n^{j/N}}\right)^{m_j}.
\end{equation}
Since $\psi_\xi^{(j)}(0)=(i)^j\bE[\xi^j]$, and $\bE[\xi^j]\neq 0$ iff $j=mN$, with $m\in \bN$, then the product $\prod_{j=1}^k\left(\frac{\psi_\xi^{(j)}(0)}{j!n^{j/N}}\right)^{m_j}$ is non-vanishing iff $m_j=0$ for $j\neq lN$ and $k=Nm_N+2Nm_{2N}+...$, i.e. if $k$  is a multiple of $N$. Analogously in the sum appearing in formula \eqref{Faa2} the only terms giving a non-vanishing contribution correspond to those partitions $\pi$ having blocks $B$ with a number of elements which is a multiple of $N$, giving, for $k=hN$:
\begin{equation}\label{Faa3}
\frac{d^{hN}}{d\lambda^{hN}} \psi_n(0)=i^{hN}\frac{\alpha^h}{n^h}\sum_{\pi\in \Pi}\frac{\lfloor nt\rfloor!}{(\lfloor nt\rfloor-|\pi|)!}
\end{equation}
where again the sum runs over the partitions $\pi$ such that $|\pi|\leq \lfloor nt\rfloor $.
Equivalently:
\begin{eqnarray*}
\frac{d^{hN}}{d\lambda^{hN}} \psi_n(0)&=&\sum \frac{(hN)!}{(m_N)!(m_{2N})! \cdots (m_{hN})!}\frac{\lfloor nt\rfloor!}{(\lfloor nt\rfloor-(m_N+m_{2N}+\dots+m_{hN}))!}\prod_{l=1}^h\left(\frac{\psi_\xi^{(lN)}(0)}{(lN)!n^{l}}\right)^{m_{lN}},\\
&=&\sum \frac{(hN)!}{(m_N)!(m_{2N})! \cdots (m_{hN})!}\frac{\lfloor nt\rfloor!}{(\lfloor nt\rfloor-(m_N+m_{2N}+\dots+m_{hN}))!}\prod_{l=1}^h\left(\frac{i^{lN}\alpha ^l}{(lN)!n^{l}}\right)^{m_{lN}},\\
&=&\frac{i^{hN}\alpha^{h}}{n^{h}}\sum \frac{(hN)!}{(m_N)!(m_{2N})! \cdots (m_{hN})!}\frac{\lfloor nt\rfloor!}{(\lfloor nt\rfloor-(m_N+m_{2N}+\dots+m_{hN}))!}\prod_{l=1}^h\frac{1}{((lN)!)^{m_{lN}}},
\end{eqnarray*}
where the sum is over the $h-$ple of non-negative integers $(m_N,m_{2N},...,m_{hN})$ such that $m_N+2m_{2N}+...+hm_{hN}=h$ and $m_N+m_{2N}+...+m_{hN}\leq \lfloor nt\rfloor$.\\
Hence, we have 
$$
\bE[(W_n(t))^{hN}]=\alpha ^{h}\sum \frac{(hN)!}{(m_N)!(N!)^{M_N}(m_{2N})!(2N)!^{m_{2N}} \cdots (m_{hN})!(hN)!^{m_{hN}}}\frac{\lfloor nt\rfloor!}{n^{h}(\lfloor nt\rfloor-(m_N+m_{2N}+\dots+m_{hN}))!}
$$
When $n\to \infty$, the leading term in the previous sum is the one corresponding to $m_N=h$ (hence $m_2N=...=m_{hN}=0)$, which is equal to 
$$\alpha^h\frac{(hN)!}{(m_N)!(N!)^{h}}\frac{\lfloor nt\rfloor!}{n^{h}(\lfloor nt\rfloor-h)!}=\alpha^ht^h\frac{(hN)!}{h!(N!)^{h}}+\alpha^h\frac{(hN)!}{h!(N!)^{h}}\left(\frac{\lfloor nt\rfloor!}{n^{h}(\lfloor nt\rfloor-h)!}-t^h\right)$$
In the case where $\lfloor nt \rfloor <h$ then this term does not appear in the sum and  we can set it equal to 0. In the case where $\lfloor nt \rfloor \geq h$, we can estimate 
the quantity inside the brackets as:
\begin{multline*}
\left|\frac{\lfloor nt\rfloor!}{n^{h}(\lfloor nt\rfloor-h)!}-t^h\right|=
\frac{1}{n^h} \left|-(nt)^h+\prod_{j=0}^{h-1}(\lfloor nt\rfloor-j)\right|
=\frac{1}{n^h}\Big|\prod_{j=0}^{h-1}\big((\lfloor nt\rfloor-j)+(\{ nt\}+j)\big)-\prod_{j=0}^{h-1}(\lfloor nt\rfloor-j)\Big|
\\
\leq \frac{1}{n^h}\sum_{j=0}^{h-1}(\{ nt\}+j)\prod_{k\neq j}nt=\frac{(nt)^{h-1}}{n^h}\sum_{j=0}^{h-1}(\{ nt\}+j)\leq \frac{(nt)^{h-1}}{n^h}\sum_{j=0}^{h-1}(1+j)=\frac{t^{h-1}(h^2+h)}{2n}
\end{multline*}
where in the second line we have used that if $a_j,b_j\in \bR$, with $a_j,b_j\geq 0$ for all $j=0,...,m$, then 
$$\prod_{j=0}^m(a_j+b_j)-\prod_{j=0}^m a_j\leq \sum_{j=0}^m b_j\prod_{k\neq j}(a_k+b_k).$$
Hence 
\begin{align*}
|R_1(n,h)| = \left|\alpha^h\frac{(hN)!}{(m_N)!(N!)^{h}}\frac{\lfloor nt\rfloor!}{n^{h}(\lfloor nt\rfloor-h)!}-\alpha^ht^h\frac{(hN)!}{h!(N!)^{h}}\right|
\leq \frac{|\alpha|^ht^{h-1}(h^2+h)}{2n}
\end{align*}
By using formula \eqref{Faa3}, the remaining terms in the sum (corrisponding to the $h-$ple $(m_N,m_{2N},...,m_{hN})$ with $m_N<h$) are bounded by
\begin{eqnarray*}
R_2(n,h)&=& \frac{\alpha^h}{n^h}\sum_{\pi\in \Pi}\frac{\lfloor nt\rfloor!}{(\lfloor nt\rfloor-|\pi|)!}-\alpha^h\frac{(hN)!}{(m_N)!(N!)^{h}}\frac{\lfloor nt\rfloor!}{n^{h}(\lfloor nt\rfloor-h)!}\\
&\leq & \frac{\alpha^h}{n^h}\sum_{\pi\in \Pi}n^{h-1}=\frac{\alpha^h}{n}B_{hN}
\end{eqnarray*}
where $B_{hN}$ is the Bell number, i.e. the number of partitions of the set $\{1,...,hN\}$. In particular, for $h\to \infty$ (see \cite{BeTa}) $ B_{hN}<\left(\frac{0.792hN}{\log(hN+1)}\right)^{hN}$, hence 
$$|R_2(n,h)|\leq \frac{|\alpha|^h}{n}\left(\frac{0.792hN}{\log(hN+1)}\right)^{hN}.$$
\end{proof}
\begin{remark}
These statistics are interesting when we consider how the processes are related to the 2-dimensional Wiener process, which has vanishing complex moments of all orders. The difference here is that the processes $W^n$ have unbounded 
variance as $n \to \infty$. If we rescale to $\bar W$ as in the introduction, then all moments vanish.

To give some intuition for why the $jN$ moments might be nonzero, consider the case $N = 4$. 
Roughly, the idea is that the process is more likely to be near one of the 4 rays in the directions $\{1, i, -1, -i\}$ 
than near the rays rotated by $\pi/4$. 
To see this imagine a time when ${\rm Re}\,W^n(t) \gg 1$. Then since ${\rm Im}\,W^n(t)$ is independent with mean 0, 
it is much more likely to be near 0 than it is to be near $\pm {\rm Re}\,W^n(t)$. 
Thus we see the underlying geometry of the processes is not statistically symmetric, as revealed in the moments; this is despite the fact that the paths converge to the fractal curves of the Wiener process, which {\em are} statistically symmetric.
\end{remark}

The next results show why the random walk $W_n$ can be regarded in a very weak sense as an $N-$stable process, in the sense of Theorem \ref{t1}.

\begin{lemma}\label{lemma-exp}
For any $\lambda,\alpha\in \bC$, $t\geq 0$ 
$$\bE[\exp(i \lambda W_n(t)]=\exp \left(\frac{i^N \alpha t}{N!} \lambda^N \right)+{\bf R}_n(\lambda),$$
where the remainder term ${\bf R}_n(\lambda)$ satisfies the following estimate
\begin{equation}\label{in-rem-exp}
|{\bf R}_n(\lambda)|\leq\left| \sum_{h=\lfloor nt\rfloor +1}^{+\infty}\frac{i ^{hN}\lambda^{hN} }{h!}\left(\frac{\alpha t}{N!}\right)^h \right|+ \frac{1}{n}\sum_{h=2}^\infty\frac{|\alpha|^h|\lambda|^{hN} }{(hN)!}\left(\frac{t^{h-1}(h^2+h)}{2}+\left(\frac{0.792hN}{\log(hN+1)}\right)^{hN}\right).
\end{equation}
\end{lemma}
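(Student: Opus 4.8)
\emph{Proof strategy.} The plan is to expand the characteristic function as a power series in $\lambda$ and read off its coefficients from Theorem \ref{t:unif}. First I would observe that, since each $\xi_k$ satisfies $|\xi_k| = |\alpha|^{1/N}$ almost surely, the random variable $W_n(t) = n^{-1/N}\sum_{k=1}^{\lfloor nt\rfloor}\xi_k$ is bounded, with $|W_n(t)|\le |\alpha|^{1/N}\lfloor nt\rfloor\, n^{-1/N}$ a.s. Hence the exponential series $e^{i\lambda W_n(t)} = \sum_{k\ge 0}\tfrac{(i\lambda)^k}{k!}(W_n(t))^k$ converges uniformly in $\omega$, so expectation passes through the sum and $\bE[e^{i\lambda W_n(t)}] = \sum_{k\ge 0}\tfrac{(i\lambda)^k}{k!}\bE[(W_n(t))^k]$ is an absolutely convergent series; absolute convergence is what then permits rearranging it so as to collect the indices $k = hN$.

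Next I would invoke Theorem \ref{t:unif}: every moment with $k$ not a multiple of $N$ vanishes, and for $k = hN$ one has $\bE[(W_n(t))^{hN}] = \big(\tfrac{\alpha t}{N!}\big)^{h}\tfrac{(hN)!}{h!}\,\mathds 1_{\{h\le\lfloor nt\rfloor\}} + R(n,hN)$ with $R(n,0) = R(n,N) = 0$. Substituting and separating the two contributions,
\[
\bE[e^{i\lambda W_n(t)}] = \sum_{h=0}^{\lfloor nt\rfloor}\frac{(i\lambda)^{hN}}{h!}\Big(\frac{\alpha t}{N!}\Big)^{h} + \sum_{h\ge 2}\frac{(i\lambda)^{hN}}{(hN)!}\,R(n,hN).
\]
I would then recognise the first sum as the $\lfloor nt\rfloor$-th partial sum of $\exp\!\big(\tfrac{(i\lambda)^N\alpha t}{N!}\big) = \exp\!\big(\tfrac{i^N\alpha t}{N!}\lambda^N\big)$, write it as the full exponential minus its tail $\sum_{h>\lfloor nt\rfloor}$, and hence read off
\[
\mathbf R_n(\lambda) = -\sum_{h=\lfloor nt\rfloor+1}^{\infty}\frac{i^{hN}\lambda^{hN}}{h!}\Big(\frac{\alpha t}{N!}\Big)^{h} + \sum_{h\ge 2}\frac{(i\lambda)^{hN}}{(hN)!}\,R(n,hN),
\]
using $(i\lambda)^{hN} = i^{hN}\lambda^{hN}$.

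To finish, the triangle inequality bounds $|\mathbf R_n(\lambda)|$ by the modulus of the tail sum — which is exactly the first term of \eqref{in-rem-exp} — plus $\sum_{h\ge 2}\tfrac{|\lambda|^{hN}}{(hN)!}|R(n,hN)|$, and inserting the estimate \eqref{in-momenti} on $|R(n,hN)|$ term by term produces precisely the second term of \eqref{in-rem-exp}. One should note that the dominating series converges, which it does since it is majorised by the entire functions already present in Theorem \ref{t:unif} and in the exponential series. I do not expect a serious obstacle here; the point deserving the most care is the index bookkeeping — the indicator $\mathds 1_{\{h\le\lfloor nt\rfloor\}}$ truncates the exponential series at $h = \lfloor nt\rfloor$, and one must check that the omitted tail reappears exactly as the first summand of the claimed bound — together with the a.s. boundedness of $W_n(t)$, which is what makes the term-by-term expectation and the subsequent rearrangement rigorous and which I would therefore invoke explicitly.
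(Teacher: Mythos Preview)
Your proposal is correct and follows essentially the same approach as the paper: expand the exponential as a power series, invoke Theorem \ref{t:unif} to evaluate the moments, recognise the truncated exponential series, and bound the remainder via \eqref{in-momenti}. Your explicit justification of the interchange of expectation and summation via the a.s.\ boundedness of $W_n(t)$ is a welcome bit of added rigor that the paper leaves implicit.
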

\begin{proof}
\begin{eqnarray*}
& &\bE[\exp(i \lambda W_n(t)] =\lim_{m \to \infty}\sum_{k=0}^m\frac{1}{k!}i ^k\lambda^k \bE[(W_n(t))^k]\\
& &=\sum_{h=0}^{\lfloor nt\rfloor}\frac{i ^{hN}\lambda^{hN} }{(hN)!}\left(\frac{\alpha t}{N!}\right)^h\frac{(hN)!}{h!}+  \lim_{m \to \infty}\sum_{h=2}^m \frac{i ^{hN}\lambda^{hN} }{(hN)!}R(n,hN)\\
& &=\exp \left(\frac{i^N \alpha t}{N!} \lambda^N \right)- \sum_{h=\lfloor nt\rfloor +1}^{+\infty}\frac{i ^{hN}\lambda^{hN} }{h!}\left(\frac{\alpha t}{N!}\right)^h +\lim_{m \to \infty}\sum_{h=2}^m \frac{i ^{hN}\lambda^{hN} }{(hN)!}R(n,hN)
\end{eqnarray*}
where $R(n,hN)$ stands for  remainder term in Theorem \ref{t:unif}. The estimate \eqref{in-rem-exp} follows directly from the inequality \eqref{in-momenti}. 
\end{proof}
A direct consequence of Lemma \ref{lemma-exp} is the following result
\begin{theorem}
\label{t1}
The characteristic function $\psi_n(\lambda)$ of $W^n(t)$ satisfies
\begin{equation}
\lim_{n \to \infty} \bE[\exp(i \lambda W^n(t))] = \exp \left(i^N \frac{\lambda^N}{N!} \alpha t \right).
\end{equation}
\end{theorem}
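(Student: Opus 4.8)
The plan is to read the statement straight off Lemma~\ref{lemma-exp}. That lemma already provides the decomposition $\bE[\exp(i\lambda W^n(t))]=\exp\!\big(i^N\lambda^N\alpha t/N!\big)+{\bf R}_n(\lambda)$ together with the explicit bound \eqref{in-rem-exp} on the remainder, so all that is left to do is to check that ${\bf R}_n(\lambda)\to 0$ as $n\to\infty$. I would fix $R>0$ and prove this uniformly for $|\lambda|\le R$, which costs nothing extra and yields the (locally uniform) convergence of characteristic functions that is natural to state. The degenerate case $t=0$ is trivial, since then $W^n(0)=0$ and both sides are $1$; so assume $t>0$, so that $\lfloor nt\rfloor\to\infty$.

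I would then handle the two summands of \eqref{in-rem-exp} separately. The first, $\big|\sum_{h=\lfloor nt\rfloor+1}^{\infty}\tfrac{i^{hN}\lambda^{hN}}{h!}(\alpha t/N!)^h\big|$, is the modulus of a tail of the absolutely convergent exponential series for $\exp(i^N\lambda^N\alpha t/N!)$; since its initial index $\lfloor nt\rfloor+1$ tends to $\infty$ with $n$, and the tail is dominated by that of $\sum_h (R^N|\alpha|t/N!)^h/h!$ uniformly in $|\lambda|\le R$, this term tends to $0$ uniformly.

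The second summand is $\tfrac1n S(\lambda)$, where $S(\lambda):=\sum_{h\ge 2}\tfrac{|\alpha|^h|\lambda|^{hN}}{(hN)!}\big(\tfrac{t^{h-1}(h^2+h)}{2}+\big(\tfrac{0.792\,hN}{\log(hN+1)}\big)^{hN}\big)$, so it suffices to show $\sup_{|\lambda|\le R}S(\lambda)<\infty$. The polynomial-weighted piece $\tfrac{|\alpha|^h|\lambda|^{hN}t^{h-1}(h^2+h)}{2(hN)!}$ converges by the ratio test, the ratio of successive terms being $O((hN)^{-N})\to 0$. For the Bell-number piece I would use Stirling in the form $(hN)!\ge(hN/e)^{hN}$, which gives $\tfrac{|\alpha|^h|\lambda|^{hN}}{(hN)!}\big(\tfrac{0.792\,hN}{\log(hN+1)}\big)^{hN}\le |\alpha|^h\big(\tfrac{0.792\,e\,|\lambda|}{\log(hN+1)}\big)^{hN}$; since $\log(hN+1)\to\infty$, for $h$ large the bracket is $\le \tfrac12(|\alpha|+1)^{-1/N}$ whenever $|\lambda|\le R$, so the general term is eventually $\le 2^{-h}$, and the series converges, uniformly on $|\lambda|\le R$. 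Combining, $|{\bf R}_n(\lambda)|\le(\text{tail of a convergent series})+\tfrac1nS(\lambda)\to 0$, which is the assertion.

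There is no deep obstacle here: the whole point is that the factorials $(hN)!$ in the denominators of Lemma~\ref{lemma-exp} outgrow both the polynomial factor $h^2+h$ and the super-exponential Bell-number bound $\big(0.792\,hN/\log(hN+1)\big)^{hN}$ in the numerators, which is precisely what Stirling delivers. The only mild care needed is to make all the estimates uniform in $\lambda$ on a ball, so that one obtains locally uniform rather than merely pointwise convergence of the characteristic functions.
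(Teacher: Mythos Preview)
Your proposal is correct and follows essentially the same route as the paper: invoke Lemma~\ref{lemma-exp} and then show ${\bf R}_n(\lambda)\to 0$ by observing that the first summand in \eqref{in-rem-exp} is the tail of a convergent exponential series while the second is $1/n$ times a series whose convergence one must check. You supply considerably more detail than the paper does (the Stirling bound for the Bell-number term and the explicit local-uniform control in $\lambda$), but the strategy is identical.
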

\begin{proof}
The result, namely $\lim_{n \to \infty} {\bf R}_n(\lambda)=0$, follows by the convergence of the series 
$\sum \frac{i ^{hN}\lambda^{hN} }{h!}\left(\frac{\alpha t}{N!}\right)^h$ and the estimate
$\sum\limits_{h=2}^\infty\frac{|\alpha|^h|\lambda|^{hN} }{(hN)!}\left(\frac{t^{h-1}(h^2+h)}{2}+\left(\frac{0.792hN}{\log(hN+1)}\right)^{hN}\right)<\infty$. 
\end{proof}

\begin{theorem}\label{th-limit}
Let $f:\bC\to \bC$ be an entire analytic function with the power series expansion $f(z)=\sum\limits_{k=0}^\infty a_kz^k$, such that   
the coefficients $\{a_k\}_{k\in N\bN}$ satisfy the following assumption:
\begin{equation} \label{coefficient}
\sum_{h=0}^{\infty} |a_{hN}|\left(\frac{0.792hN}{\log(hN+1)}\right)^{hN}<\infty
\end{equation}
Then 
\begin{align*}
\lim_{n\to\infty}\bE[f(W_n(t))] = \sum_{h=0}^\infty a_{hN}\frac{(hN)!}{h!}\left(\frac{\alpha t}{N!}\right)^h
 = \sum_{h=0}^\infty \frac{f^{(hN)}(0)}{h!}\left(\frac{\alpha t}{N!}\right)^h.
\end{align*}
\end{theorem}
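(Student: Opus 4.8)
The plan is to expand $f$ in its Taylor series, integrate term by term, evaluate every complex moment by Theorem~\ref{t:unif}, and then interchange the limit $n\to\infty$ with the resulting series, using assumption \eqref{coefficient} to furnish a majorant uniform in $n$. The first point is that for fixed $n$ and $t$ the random variable $W_n(t)$ takes only finitely many values, all lying in the closed disc of radius $|\alpha|^{1/N}\lfloor nt\rfloor\,n^{-1/N}$ (each $\xi_k$ has modulus $|\alpha|^{1/N}$ by Lemma~\ref{lp1}); since $f$ is entire its Taylor series converges uniformly on that disc, so $f(W_n(t))$ is bounded and, by dominated convergence against the deterministic bound $\sum_k|a_k|\,(|\alpha|^{1/N}\lfloor nt\rfloor n^{-1/N})^k<\infty$, one gets $\bE[f(W_n(t))]=\sum_{k\ge0}a_k\,\bE[(W_n(t))^k]$.

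Next, invoke Theorem~\ref{t:unif}: only the indices $k=hN$ contribute, and for those $\bE[(W_n(t))^{hN}]=\bigl(\tfrac{\alpha t}{N!}\bigr)^h\frac{(hN)!}{h!}\,\mathds{1}_{\{h\le\lfloor nt\rfloor\}}+R(n,hN)$, with $R(n,0)=R(n,N)=0$ and $|R(n,hN)|$ controlled by \eqref{in-momenti} for $h\ge2$. Hence $\bE[f(W_n(t))]$ splits as the partial sum $\sum_{h=0}^{\lfloor nt\rfloor}a_{hN}\bigl(\tfrac{\alpha t}{N!}\bigr)^h\frac{(hN)!}{h!}$ of the series $S(t):=\sum_{h\ge0}a_{hN}\bigl(\tfrac{\alpha t}{N!}\bigr)^h\frac{(hN)!}{h!}$, plus the remainder $\sum_{h\ge2}a_{hN}R(n,hN)$.

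Then I would verify the two needed convergences. For the main series: writing $\frac{(hN)!}{h!}\bigl|\tfrac{\alpha t}{N!}\bigr|^h=\frac{(hN)!}{h!(N!)^h}|\alpha t|^h$ and bounding the multinomial coefficient $\frac{(hN)!}{h!(N!)^h}$ (the number of partitions of an $hN$-element set into $h$ blocks of size $N$) by the Bell number $B_{hN}\le\bigl(\tfrac{0.792\,hN}{\log(hN+1)}\bigr)^{hN}$, assumption \eqref{coefficient} yields absolute convergence of $S(t)$ — the super-exponential factor $\bigl(\tfrac{0.792\,hN}{\log(hN+1)}\bigr)^{hN}$ dominates the geometric factor $|\alpha t|^h$ — so the partial sums converge to $S(t)$ as $n\to\infty$. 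For the remainder, plugging \eqref{in-momenti} bounds $|\sum_{h\ge2}a_{hN}R(n,hN)|$ by $\frac1n$ times $\sum_{h\ge2}|a_{hN}||\alpha|^h\bigl(\tfrac12 t^{h-1}(h^2+h)+(\tfrac{0.792\,hN}{\log(hN+1)})^{hN}\bigr)$, a finite constant by \eqref{coefficient} (the term involving $t^{h-1}(h^2+h)$ being dwarfed, for large $h$, by the super-exponential factor), so this term is $O(1/n)\to0$. Letting $n\to\infty$ and using $f^{(hN)}(0)=(hN)!\,a_{hN}$ delivers both claimed equalities.

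The step I expect to be the real obstacle is this uniform-in-$n$ control of the two series: the bound \eqref{in-momenti} carries a factor $|\alpha|^h$ undamped by any factorial, so one has to make sure \eqref{coefficient} — sharpened by the facts that $f$ entire forces $|a_{hN}|^{1/(hN)}\to0$ and that, by Stirling, $\frac{(hN)!}{h!(N!)^h}$ is smaller than $B_{hN}$ by a super-geometric factor — genuinely suffices to absorb the geometric contributions; the termwise integration, the splitting, and the identification of the limit are then routine. (For the exponential-type functions used in the later sections the coefficients obey $|a_k|\le C\tau^k/k!$, so the $1/(hN)!$ makes all of these estimates immediate.)
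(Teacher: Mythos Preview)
Your proof follows essentially the same route as the paper's: expand $f$ in its Taylor series, invoke Theorem~\ref{t:unif} to keep only the indices $k=hN$, split $\bE[f(W_n(t))]$ into the partial sum $\sum_{h\le\lfloor nt\rfloor}a_{hN}\frac{(hN)!}{h!}\bigl(\tfrac{\alpha t}{N!}\bigr)^h$ plus the remainder $\sum_h a_{hN}R(n,hN)$, and bound the latter by $C/n$ via \eqref{in-momenti} and \eqref{coefficient}. You are in fact more careful than the paper---justifying the termwise integration, verifying absolute convergence of the limiting series via the multinomial/Bell-number comparison, and flagging the extra $|\alpha|^h$ factor as the delicate point---whereas the paper simply asserts the finiteness of the majorant $C$ without further comment.
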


\begin{proof}
We directly compute
\begin{multline*}
\lim_{n \to \infty} \bE[f(W_n(t))]=\lim_{n \to \infty} \sum_{k=0}^\infty a_k\bE[(W_n(t))^k]
\\
=\lim_{n \to \infty}\sum_{h=0}^{\lfloor nt\rfloor}a_{hN}\left(\frac{\alpha t}{N!}\right)^h\frac{(hN)!}{h!}+\lim_{n \to \infty} \lim_{m \to \infty}\sum_{h=0}^m a_{hN}R(n,hN)=\sum_{h=0}^{\infty}a_{hN}\left(\frac{\alpha t}{N!}\right)^h\frac{(hN)!}{h!}
\end{multline*}
Indeed, by assumption \eqref{coefficient}, we have 
$$
  \left|\sum_{h=0}^m a_{hN}R(n,hN)\right|\leq \frac{C}{n}, \qquad C:=\sum_{h=0}^\infty a_{hN}|\alpha|^h\left(\frac{t^{h-1}(h^2+h)}{2}+\left(\frac{0.792hN}{\log(hN+1)}\right)^{hN}\right)<\infty.
$$
\end{proof}

\begin{lemma}\label{lemmacrescita}
If there exist $C_1,C_2\in\bR$ such that for all $k$ 
the coefficients $a_k$ satisfy the inequality $|a_k|\leq \frac{C_1\,C_2^k}{k!}$, then they satisfy assumption \eqref{coefficient}.
\end{lemma}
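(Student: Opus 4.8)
The plan is to insert the hypothesised decay estimate $|a_k|\le C_1 C_2^{k}/k!$ directly into the series in \eqref{coefficient} and then defeat the factorial in the denominator against the super-exponential factor $\bigl(0.792\,hN/\log(hN+1)\bigr)^{hN}$ by means of an elementary Stirling-type lower bound for factorials. Without loss of generality I would first replace $C_1,C_2$ by $|C_1|,|C_2|$, so that both constants may be taken nonnegative, and I would isolate the term $h=0$, which simply equals $|a_0|<\infty$ (here $\log(hN+1)=\log 1=0$, but the factor is raised to the power $hN=0$, so by the usual empty-product convention it contributes $1$).

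For $h\ge 1$ the key ingredient is the inequality $m!\ge (m/e)^{m}$, valid for every $m\in\bN$, which follows immediately from $e^{m}=\sum_{j\ge 0} m^{j}/j!\ge m^{m}/m!$. Taking $m=hN$ gives $1/(hN)!\le (e/hN)^{hN}$, and hence the general term of the series in \eqref{coefficient} is bounded by
\begin{align*}
|a_{hN}|\left(\frac{0.792\,hN}{\log(hN+1)}\right)^{hN}
\le C_1\,C_2^{hN}\left(\frac{e}{hN}\right)^{hN}\left(\frac{0.792\,hN}{\log(hN+1)}\right)^{hN}
= C_1\left(\frac{0.792\,e\,C_2}{\log(hN+1)}\right)^{hN}.
\end{align*}
Since $\log(hN+1)\to\infty$ as $h\to\infty$, there is an index $h_0$ such that $\log(hN+1)\ge 2\cdot 0.792\,e\,C_2$ for all $h\ge h_0$, so that for such $h$ the term is at most $C_1\,2^{-hN}$. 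The tail $\sum_{h\ge h_0}C_1\,2^{-hN}$ is a convergent geometric series and the finitely many remaining terms ($0\le h<h_0$) are finite, so the series in \eqref{coefficient} converges, which is the claim.

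I do not anticipate a real obstacle: the argument is a one-line Stirling estimate followed by a geometric-series comparison. The only places demanding a little care are the degenerate term $h=0$ and ensuring the constants are chosen nonnegative, both of which are routine bookkeeping; the case $C_2=0$ is also covered automatically, since then the bounding base is $0$ and every term with $h\ge 1$ vanishes.
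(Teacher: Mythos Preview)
Your argument is correct. The paper itself states Lemma~\ref{lemmacrescita} without proof, so there is nothing to compare against; your Stirling-type bound $m!\ge (m/e)^m$ followed by the geometric comparison is exactly the kind of one-line verification the authors presumably had in mind and omitted as routine.

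Two minor remarks. First, the $h=0$ term in \eqref{coefficient} is indeed a notational artefact: in the proof of Theorem~\ref{t:unif} the remainder $R(n,hN)$ vanishes for $h=0,1$, so only the terms $h\ge 2$ actually matter for the application, and your convention of reading the $h=0$ contribution as $|a_0|$ is harmless. Second, since the hypothesis $|a_k|\le C_1 C_2^k/k!$ forces the right-hand side to be nonnegative for every $k$, one already has $C_1\ge 0$ and $C_2^k\ge 0$ for all $k$, hence $C_2\ge 0$; so your ``without loss of generality'' step is in fact automatic.
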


\begin{definition}\label{def-exp}
An analytic function $f$ is of {\bf exponential type} $c$ if
\begin{align*}
f(z) = \sum_{k=0}^\infty a_k z^k = \sum_{k=0}^\infty \frac{b_k}{k!} z^k
\end{align*}
where $|b_k|^{1/k} \to c$ as $k \to \infty$. Clearly if $c < \infty$ then $f$ is entire analytic. 
\end{definition}

Roughly, the idea is that a function of exponential type $c$ is asymptotically bounded by
$e^{c |z|}$. Alternatively, a necessary and sufficient condition is that $c = \limsup_{n \to \infty} |f^{(n)}(x)|$ for one (and hence all) $x \in \bC$.


\begin{lemma}\label{lemma8}
If $f$ is of exponential type, then it satisfies assumption \eqref{coefficient}.
\end{lemma}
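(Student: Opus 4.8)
The plan is to reduce the statement to Lemma~\ref{lemmacrescita}. Writing $f(z) = \sum_{k=0}^\infty a_k z^k = \sum_{k=0}^\infty \frac{b_k}{k!} z^k$ as in Definition~\ref{def-exp}, the hypothesis that $f$ is of exponential type means $|b_k|^{1/k} \to c$ as $k \to \infty$ for some $c < \infty$. A convergent real sequence is bounded, so $M := \sup_{k \geq 1} |b_k|^{1/k} < \infty$, whence $|b_k| \leq M^k$ for every $k \geq 1$. I would then set $C_2 := M$ and $C_1 := \max\{1, |b_0|\}$ and check the bound $|a_k| = |b_k|/k! \leq C_1 C_2^k / k!$ for all $k \geq 0$: for $k = 0$ this is $|a_0| = |b_0| \leq C_1$ by the choice of $C_1$, and for $k \geq 1$ it follows from $|b_k| \leq M^k = C_2^k$ together with $C_1 \geq 1$.

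Having established that the coefficients $a_k$ satisfy $|a_k| \leq C_1 C_2^k / k!$ for all $k$, the conclusion is immediate: by Lemma~\ref{lemmacrescita} the sequence $\{a_k\}$ satisfies assumption~\eqref{coefficient}, which is exactly the claim. (If one preferred to avoid citing Lemma~\ref{lemmacrescita}, the same bound combined with Stirling's estimate for $(hN)!$ shows that the general term $|a_{hN}|\left(\tfrac{0.792\,hN}{\log(hN+1)}\right)^{hN}$ decays faster than any geometric ratio, so the series converges; but invoking the earlier lemma is cleaner.)

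There is essentially no obstacle here: the only mathematical content is the elementary observation that $|b_k|^{1/k}$, being convergent, is bounded, which converts the exponential-type condition into the factorial-growth hypothesis of Lemma~\ref{lemmacrescita}. The only minor point to be careful about is the degenerate case $k = 0$ (where $|b_0|^{1/0}$ is not defined), which is handled separately by the choice of the constant $C_1$.
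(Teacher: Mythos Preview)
Your proof is correct and follows exactly the route the paper has set up: the paper states Lemma~\ref{lemmacrescita} immediately before Lemma~\ref{lemma8} precisely so that the latter reduces to the former via the elementary observation that a convergent sequence $|b_k|^{1/k}$ is bounded. The paper in fact omits the proof of Lemma~\ref{lemma8} entirely, so your argument fills in the intended one-line reduction (with appropriate care at $k=0$).
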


\begin{lemma}
If $f:\bC\to\bC$ is the Fourier transform of a complex bounded variation measure $\mu$ on $\bR$ with compact support, then $f$ satisfies the assumptions of Lemma \ref{lemmacrescita}.
\end{lemma}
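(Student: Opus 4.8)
The plan is to show that if $f = \widehat{\mu}$ for a complex measure $\mu$ of bounded variation with compact support, say $\mathrm{supp}\,\mu \subseteq [-M,M]$, then the Taylor coefficients of $f$ at the origin decay fast enough to meet the hypothesis $|a_k| \leq C_1 C_2^k / k!$ of Lemma \ref{lemmacrescita}. First I would write $f(z) = \int_{\bR} e^{izx}\,\mu({\rm d}x)$; since $\mu$ has compact support, differentiation under the integral sign is justified (the integrand and all its $x$-derivatives are bounded on $[-M,M]$ uniformly for $z$ in compact sets), so $f$ is entire with $f^{(k)}(0) = \int_{\bR} (ix)^k \mu({\rm d}x)$, hence $b_k := f^{(k)}(0)$ satisfies $|b_k| \leq M^k \, |\mu|(\bR)$, where $|\mu|$ is the total variation measure.

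Then, recalling $a_k = b_k/k!$ from the normalisation in Definition \ref{def-exp}, I get the bound $|a_k| \leq \frac{|\mu|(\bR)\, M^k}{k!}$, which is exactly the hypothesis of Lemma \ref{lemmacrescita} with $C_1 := |\mu|(\bR)$ and $C_2 := M$ (any number $\geq M$ also works). Invoking Lemma \ref{lemmacrescita} then immediately yields that $f$ satisfies assumption \eqref{coefficient}, completing the proof.

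There is essentially no obstacle here: the only point requiring a word of care is the legitimacy of differentiating under the integral, which rests squarely on the compact support assumption (without it one would need decay of $\mu$ and the conclusion could fail). I would state the proof in three or four lines, noting the total variation bound and citing Lemma \ref{lemmacrescita}. One could alternatively observe directly that $f$ is of exponential type $\leq M$ and invoke Lemma \ref{lemma8}, but routing through Lemma \ref{lemmacrescita} as the statement suggests is the cleanest path.

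\begin{proof}
Let $\mu$ be a complex measure of bounded variation on $\bR$ with $\mathrm{supp}\,\mu \subseteq [-M,M]$ for some $M > 0$, and write $f(z) = \int_{\bR} e^{izx}\,\mu({\rm d}x)$. Since the support of $\mu$ is compact, for each $k$ the function $x \mapsto (ix)^k e^{izx}$ and all its derivatives are bounded on $[-M,M]$, uniformly for $z$ in compact subsets of $\bC$; hence $f$ is entire and one may differentiate under the integral sign to obtain
\begin{align*}
f^{(k)}(0) = \int_{\bR} (ix)^k \,\mu({\rm d}x), \qquad \text{so} \qquad |f^{(k)}(0)| \leq M^k \, |\mu|(\bR),
\end{align*}
where $|\mu|$ denotes the total variation of $\mu$. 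Writing $f(z) = \sum_{k=0}^\infty a_k z^k$ with $a_k = f^{(k)}(0)/k!$, this gives
\begin{align*}
|a_k| \leq \frac{|\mu|(\bR)\, M^k}{k!}
\end{align*}
for all $k$, which is precisely the hypothesis of Lemma \ref{lemmacrescita} with $C_1 = |\mu|(\bR)$ and $C_2 = M$. Therefore $f$ satisfies assumption \eqref{coefficient}, as claimed.
\end{proof}
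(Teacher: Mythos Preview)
Your argument is correct and is exactly the natural one: the compact support of $\mu$ gives the uniform bound $|f^{(k)}(0)| \le M^k |\mu|(\bR)$, whence $|a_k| \le C_1 C_2^k/k!$ with $C_1 = |\mu|(\bR)$ and $C_2 = M$. The paper states this lemma without proof, so there is nothing to compare against; your write-up fills the gap cleanly and matches what the authors evidently intended.
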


\begin{corollary}
If $f:\bC\to\bC$ is the Fourier transform of a complex bounded variation measure $\mu$ on $\bR$ with compact support, then for all $t\in\bR, x\in \bR$
$$\lim_{n\to \infty }\bE[f(x+W_n(t))]=\int e^{iyx} e^{i^N\alpha t \frac{y^N}{N!}}d\mu(y)$$
\end{corollary}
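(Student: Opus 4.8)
The plan is to combine the previous corollary-free results of this section with the Fourier representation of $f$. First I would write $f(z) = \int_{\bR} e^{iyz}\, d\mu(y)$, valid for all $z \in \bC$ since $\mu$ has compact support (so the integral converges absolutely and defines an entire function). Substituting $z = x + W_n(t)$ and applying Fubini's theorem — justified because $|e^{iy(x+W_n(t))}| = e^{-y\,\mathrm{Im}\,W_n(t)} \le e^{|y|\,|W_n(t)|}$ is integrable against $|\mu| \otimes \bP$ on the compact support of $\mu$, using that $\bE[e^{|y|\,|W_n(t)|}] < \infty$ by Lemma \ref{lp1} and the bound $\bE[|\xi|^m] = |\alpha|^{m/N}$ — I obtain
\begin{align*}
\bE[f(x + W_n(t))] = \int_{\bR} e^{iyx}\, \bE[e^{iy W_n(t)}]\, d\mu(y).
\end{align*}

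Next I would pass to the limit $n \to \infty$ inside the integral. For each fixed $y$ in the (compact) support of $\mu$, Theorem \ref{t1} gives $\bE[e^{iy W_n(t)}] \to \exp\bigl(i^N \frac{y^N}{N!}\alpha t\bigr)$. To exchange limit and integral I would invoke dominated convergence: I need a $\mu$-integrable bound on $|e^{iyx}\,\bE[e^{iy W_n(t)}]| = |\bE[e^{iy W_n(t)}]|$ uniform in $n$, for $y$ ranging over the compact support. Here the remainder estimate \eqref{in-rem-exp} from Lemma \ref{lemma-exp} is the tool: for $|y| \le R$ (with $R$ chosen so that $\mathrm{supp}\,\mu \subset [-R,R]$), the two series appearing in the bound on $|{\bf R}_n(y)|$ are dominated by convergent series in $R$ independent of $n$ (the first because $\sum_h \frac{R^{hN}}{h!}(\frac{|\alpha| t}{N!})^h < \infty$, the second by the argument already used in the proof of Theorem \ref{t1}), so $|\bE[e^{iy W_n(t)}]| \le |\exp(i^N \frac{y^N}{N!}\alpha t)| + \sup_n |{\bf R}_n(y)| \le C_R$ uniformly. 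Then dominated convergence yields
\begin{align*}
\lim_{n \to \infty} \bE[f(x + W_n(t))] = \int_{\bR} e^{iyx}\, \exp\Bigl(i^N \alpha t\, \frac{y^N}{N!}\Bigr)\, d\mu(y),
\end{align*}
which is the claimed formula. (I'd note the statement's $e^{i^N \alpha t y^N/N!}$ and my $\exp(i^N \alpha t \frac{y^N}{N!})$ agree; one should be mildly careful that the hypothesis of the corollary, as stated, omits the factor $i^N$ that Theorem \ref{t1} actually produces, but the proof delivers exactly what Theorem \ref{t1} gives.)

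The main obstacle is the rigorous justification of the two interchanges of limit/integral with the $\mu$-integral, i.e.\ producing the uniform-in-$n$ domination. Absolute integrability for Fubini is routine once one uses the compact support of $\mu$ together with the finite exponential moments of $W_n(t)$ from Lemma \ref{lp1}; the more delicate point is the uniform bound for dominated convergence, but this is exactly what the explicit remainder estimate \eqref{in-rem-exp} is designed to give, since every term there is controlled on $|y| \le R$ by a convergent series whose value does not depend on $n$. Everything else is bookkeeping: one invokes Lemma \ref{lemmacrescita} (or directly the stated hypothesis) to see $f$ falls in the class covered by Theorem \ref{th-limit} and Theorem \ref{t1}, and then the Fourier-transform structure does the rest.
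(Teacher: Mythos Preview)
Your argument is correct. The paper does not give an explicit proof of this corollary, but its placement immediately after the lemma asserting that such an $f$ satisfies the hypotheses of Lemma~\ref{lemmacrescita} makes clear that the intended route is: apply Theorem~\ref{th-limit} to $g(z)=f(x+z)$ to obtain the series $\sum_h \frac{f^{(hN)}(x)}{h!}(\alpha t/N!)^h$, compute $f^{(hN)}(x)=\int (iy)^{hN}e^{iyx}\,d\mu(y)$, and then interchange sum and integral (justified by compact support) to recognise the exponential. Your route is genuinely different: you bypass Theorem~\ref{th-limit} and the Taylor-coefficient machinery entirely, going instead through Fubini and the characteristic-function limit of Theorem~\ref{t1}, with Lemma~\ref{lemma-exp} supplying the uniform domination. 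Your approach is arguably more direct for this particular statement, since it exploits the Fourier structure from the outset and never needs the Bell-number estimates behind condition~\eqref{coefficient}; the paper's route, on the other hand, reuses the general power-series framework and makes the corollary a genuine consequence of the preceding lemma. One small simplification you could make: your Fubini justification via exponential moments is heavier than necessary, since for each fixed $n$ the variable $W_n(t)$ is almost surely bounded by $n^{-1/N}\lfloor nt\rfloor\,|\alpha|^{1/N}$, so the integrand is bounded and Fubini is immediate.
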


\begin{remark}
By the Paley-Wiener Theorem \cite{Ru}, any function $f\in L^2(\bR)$ of exponential type is the Fourier transform of a function $\hat f\in L^2(\bR)$ with compact support.\\
More generally, any function $f:\bC\to\bC$  which is the Fourier transform of a complex bounded variation measure $\mu$ on $\bR$ with compact support is of exponential type, and furthermore it is bounded on the real line.
\end{remark}

\section{It\^o calculus}

We shall consider a sort of It\^o calculus for suitable regular functions of the random walk $W^n$ that mimics the development of classical stochastic differential equations.
We begin by considering integrals with respect to the processes $(W^n)^k$ for arbitrary $k \in \bN$.
Even if Theorem \ref{th-limit} and the estimate \eqref{in-momenti} would allow  the development of the theory for  a more general class of analytic functions $g$, as  stated in the introduction, we shall restrict to the case where $g$ is an  analytic function of exponential type. This is sufficient for our purposes and will simplify the notation. 


We state here, for later use, the following theorem, which follows directly from Theorem \ref{th-limit} and Lemma \ref{lemma8}. 
\begin{theorem}\label{t:163}
Let $g(z)  = \sum_{k=0}^\infty a_k z^k$ be an analytic function of exponential type $c$.
Then
\begin{equation}
\label{e:163-0}
\lim_{n \to \infty} \bE[g(W^n(t))] = \sum_{h=0}^\infty a_{hN}\frac{(hN)!}{h!}\left(\frac{\alpha t}{N!}\right)^h
 =\sum_{h=0}^\infty \frac{g^{(hN)}(0)}{h!}\left(\frac{\alpha t}{N!}\right)^h.
\end{equation}
\end{theorem}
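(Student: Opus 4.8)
The plan is to read off Theorem~\ref{t:163} as a direct specialisation of Theorem~\ref{th-limit}, the only point requiring attention being the verification of the summability hypothesis~\eqref{coefficient}. First I would invoke Lemma~\ref{lemma8}: since $g$ is of exponential type, its Taylor coefficients $\{a_k\}$ satisfy
\begin{align*}
\sum_{h=0}^\infty |a_{hN}|\left(\frac{0.792\,hN}{\log(hN+1)}\right)^{hN}<\infty,
\end{align*}
which is exactly the assumption~\eqref{coefficient} needed to apply Theorem~\ref{th-limit} to the function $f=g$. Doing so yields immediately
\begin{align*}
\lim_{n\to\infty}\bE[g(W^n(t))]=\sum_{h=0}^\infty a_{hN}\frac{(hN)!}{h!}\left(\frac{\alpha t}{N!}\right)^h,
\end{align*}
which is the first equality in~\eqref{e:163-0}.

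The second equality in~\eqref{e:163-0} is purely formal and costs nothing: differentiating the power series $g(z)=\sum_{k\ge 0}a_k z^k$ term by term $hN$ times and evaluating at $z=0$ gives $g^{(hN)}(0)=(hN)!\,a_{hN}$, hence $a_{hN}\tfrac{(hN)!}{h!}=\tfrac{g^{(hN)}(0)}{h!}$ for every $h\in\bN$, and the two series coincide term by term.

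I do not expect a genuine obstacle here: all the analytic content --- the moment asymptotics of $W^n(t)$ with the explicit remainder bound~\eqref{in-momenti} from Theorem~\ref{t:unif}, and the interchange of the limit in $n$ with the termwise summation carried out in the proof of Theorem~\ref{th-limit} --- has already been established, and Lemma~\ref{lemma8} precisely certifies that ``exponential type'' is a sufficient growth condition. If one wished to make the argument self-contained one could bypass Lemma~\ref{lemma8} and check~\eqref{coefficient} by hand: writing $a_k=b_k/k!$ with $|b_k|^{1/k}\to c$, one has $|a_{hN}|\le C\,(c+\eps)^{hN}/(hN)!$ for large $h$, and since $(hN)!$ grows faster than $\bigl(\tfrac{0.792\,hN}{\log(hN+1)}\bigr)^{hN}$ by a super-geometrically decaying factor (Stirling versus the slowly growing $hN/\log(hN)$), the series converges; but routing it through Lemma~\ref{lemma8} keeps the exposition clean.
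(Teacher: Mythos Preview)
Your proposal is correct and matches the paper's own argument exactly: the paper states that Theorem~\ref{t:163} ``follows directly from Theorem~\ref{th-limit} and Lemma~\ref{lemma8}'', which is precisely the route you take. Your additional remarks (the trivial second equality via $g^{(hN)}(0)=(hN)!\,a_{hN}$, and the optional direct verification of~\eqref{coefficient}) are sound and only make the dependence more explicit.
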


We define an {\bf It\^o integral} for the process $g(W^n(t))$ as
\begin{align*}
\int_0^t g(W^n(s)) \, {\rm d}W^n(s) = \sum_{\tau = 0}^{\lfloor n t \rfloor - 1} g(W^n(\tfrac{\tau}{n})) (W^n(\tfrac{\tau+1}{n}) - W^n(\tfrac{\tau}{n})) = \frac{1}{n^{1/N}} \, \sum_{\tau = 0}^{\lfloor n t \rfloor - 1} g(W^n(\tfrac{\tau}{n})) (\xi_{\tau+1})
\end{align*}
and, for any $k \in \bN$,
\begin{align}
\label{e:ito-k}
\int_0^t g(W^n(s)) \, {\rm d}(W^n(s))^k = \frac{1}{n^{k/N}} \, \sum_{\tau = 0}^{\lfloor n t \rfloor - 1} g(W^n(\tfrac{\tau}{n})) (\xi_{\tau+1})^k.
\end{align}

Our next step is the analysis of expectations. 
Taking the mean in both sides of \eqref{e:ito-k} and recalling that $W^n(\tfrac{\tau}{n})$ is independent from $\xi_{\tau+1}$, we get
\begin{align*}
\bE \left[\int_0^t g(W^n(s)) \, {\rm d}(W^n(s))^k\right] = \frac{1}{n^{k/N}} \, \sum_{\tau = 0}^{\lfloor n t \rfloor - 1} \bE[g(W^n(\tfrac{\tau}{n}))] \, \bE[ (\xi_{\tau+1})^k]
\end{align*}
and recalling \eqref{e3} we get 
\begin{align}
\label{e.zero-mean}
\bE \left[\int_0^t g(W^n(s)) \, {\rm d}(W^n(s))^k\right] = 0 \qquad \text{for all $k \not= m N$, $m \in \bN$;}
\end{align}
for $k=N$ we have
\begin{align*}
\bE \left[\int_0^t g(W^n(s)) \, {\rm d}(W^n(s))^N\right] = \alpha \int_0^t \bE[g(z + W^n(s))] \, {\rm d}s
\end{align*}
and finally for $k = m N$, $m \in \bN$, $m > 1$:
\begin{align}\label{eq:unify-1}
\bE \left[\int_0^t g(W^n(s)) \, {\rm d}(W^n(s))^{m N}\right] = \left(\frac{\alpha}{n}\right)^m \sum_{\tau=0}^{\lfloor n t\rfloor-1} \bE[g(W^n(\tfrac{\tau}{n}))] = \alpha \, \left(\frac{\alpha}{n}\right)^{m-1} \int_0^t \bE[g(W^n(s))] \, {\rm d}s
\end{align}
so we expect this to vanish to zero as $n \to \infty$ when $m > 1$.

\begin{lemma}
\label{le.1}
Assume that $g$ is an analytic function of exponential type $c$. Then
\begin{align*}
\lim_{n \to \infty} \int_0^t \bE[g(W^n(s))] \, {\rm d}s =  \int_0^t \lim_{n \to \infty} \bE[g(W^n(s))] \, {\rm d}s.
\end{align*}
\end{lemma}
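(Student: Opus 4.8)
The plan is to deduce the statement from the bounded (dominated) convergence theorem on the compact interval $[0,t]$. Two ingredients are required: (i) pointwise convergence, i.e.\ for each fixed $s\in[0,t]$ the numbers $\bE[g(W^n(s))]$ converge as $n\to\infty$; and (ii) a uniform bound $\sup_{n\ge 1}\sup_{s\in[0,t]}|\bE[g(W^n(s))]|<\infty$. Ingredient (i) is immediate: it is exactly Theorem \ref{t:163} applied with the fixed time $s$ in the role of $t$, and the limit is the function $s\mapsto\sum_{h=0}^\infty \frac{g^{(hN)}(0)}{h!}\big(\tfrac{\alpha s}{N!}\big)^h$, which (by the exponential-type bound on the coefficients) converges for every $s$ and is continuous, hence integrable, on $[0,t]$. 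Since on a bounded interval a constant is integrable, ingredient (ii) supplies the dominating function, and the interchange of limit and integral follows at once.

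So the substantive step is the uniform bound (ii). First I would note that, since $W^n(s)$ is a finite, hence a.s.\ bounded, sum of the $\xi_k$, one may expand $\bE[g(W^n(s))]=\sum_{k=0}^\infty a_k\,\bE[(W^n(s))^k]$ term by term, and by Lemma \ref{lp1} (the ``otherwise'' case of Theorem \ref{t:unif}) only the indices $k=hN$ contribute. Inserting the moment estimate of Theorem \ref{t:unif} and the remainder bound \eqref{in-momenti} gives
$$
|\bE[g(W^n(s))]|\le \sum_{h=0}^\infty |a_{hN}|\,\Bigl|\frac{\alpha s}{N!}\Bigr|^h\frac{(hN)!}{h!}\;+\;\frac1n\sum_{h=2}^\infty |a_{hN}|\,|\alpha|^h\Bigl(\frac{s^{h-1}(h^2+h)}{2}+\Bigl(\frac{0.792\,hN}{\log(hN+1)}\Bigr)^{hN}\Bigr).
$$
Because $g$ is of exponential type $c$, for each $\eps>0$ there is $C_\eps$ with $|a_{hN}|\le C_\eps (c+\eps)^{hN}/(hN)!$ (Definition \ref{def-exp}); substituting this into the first sum collapses it to $C_\eps\exp\!\big((c+\eps)^N|\alpha| s/N!\big)\le C_\eps\exp\!\big((c+\eps)^N|\alpha| t/N!\big)$, finite and independent of $n$ and of $s\in[0,t]$. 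For the second sum, Lemma \ref{lemma8} (with Lemma \ref{lemmacrescita}) gives both $\sum_h |a_{hN}|\big(\tfrac{0.792hN}{\log(hN+1)}\big)^{hN}<\infty$ and $|a_{hN}|\le C_1C_2^{hN}/(hN)!$, the latter making $\sum_h |a_{hN}||\alpha|^h s^{h-1}(h^2+h)/2$ converge uniformly for $s\in[0,t]$; hence the second term is at most $C(t)/n\le C(t)$. Adding the two bounds yields a finite $M(t)$ independent of $n\ge 1$ and of $s\in[0,t]$, which is (ii).

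I expect the only delicate point to be the bookkeeping in this second step: one has to apply the remainder estimate \eqref{in-momenti} with the \emph{running} time $s\in[0,t]$ (not a fixed $t$) and then sum the resulting series in $h$ against the exponential-type coefficient bound in such a way that the $s$-dependence is controlled \emph{uniformly} on $[0,t]$. Once that is done, nothing beyond Theorems \ref{t:unif} and \ref{t:163}, Lemmas \ref{lemma8} and \ref{lemmacrescita}, and the bounded convergence theorem is needed; no new idea is involved.
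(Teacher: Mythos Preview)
Your proposal is correct and follows essentially the same route as the paper: decompose $\bE[g(W^n(s))]$ via Theorem~\ref{t:163} (equivalently Theorem~\ref{th-limit}) into the limiting series plus a remainder that is $O(1/n)$ uniformly for $s\in[0,t]$, and then invoke dominated convergence. The paper's proof is terser---it simply asserts the decomposition $\bE[g(W_n(s))]=g_1(s)+R(n,s)$ with $|R(n,s)|\le \tfrac{1}{n}C(T,\alpha)$ and appeals directly to DCT---whereas you carry out explicitly the bookkeeping (summing \eqref{in-momenti} against the exponential-type coefficient bound and controlling the $s$-dependence on $[0,t]$) that underlies that assertion.
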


\begin{proof}
From Theorem \ref{t:163} we have
\begin{align*}
\bE[g(W_n(t))]=
g_1(t) + R(n,t)
\end{align*}
where 
\begin{align*}
g_1(t) = \sum_{h=0}^\infty \frac{b_{hN}}{h!} \left(\frac{\alpha t}{N!} \right)^h
\end{align*}
and
$|R_n(t)| \le \frac{1}{n} C(T,\alpha)$.
The claim now follows from an application of Dominated Convergence Theorem.
\end{proof}

\medskip

In particular, we may record the following identity, concerning the limit behavior for polynomials. It
follows from Lemma \ref{le.1} and a direct application of Theorem \ref{t:unif}.

\begin{corollary}\label{c:unif}
Let $g(x) = x^{mN}$. Then
\begin{align*}
\lim_{n \to \infty} \int_0^t \bE[g(W^n(s))] \, {\rm d}s =  \frac{(mN)!}{(m+1)!} \left(\frac{\alpha}{N!}\right)^m t^{m+1}.
\end{align*}
\end{corollary}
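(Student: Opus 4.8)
The plan is to combine Lemma \ref{le.1}, which lets us exchange the limit in $n$ with the time integral, with Theorem \ref{t:unif}, which gives the exact value of the limiting moment. First I would note that $g(x) = x^{mN}$ is a polynomial, hence certainly an analytic function of exponential type (with $c = 0$), so Lemma \ref{le.1} applies and yields
\begin{align*}
\lim_{n \to \infty} \int_0^t \bE[g(W^n(s))] \, {\rm d}s = \int_0^t \lim_{n \to \infty} \bE[(W^n(s))^{mN}] \, {\rm d}s.
\end{align*}
Next I would evaluate the integrand using Theorem \ref{t:unif} with $k = mN$ (so $h = m$): for each fixed $s$ and for $n$ large enough that $\lfloor n s \rfloor \geq m$, we have $\bE[(W^n(s))^{mN}] = \left(\frac{\alpha s}{N!}\right)^{m} \frac{(mN)!}{m!} + R(n,mN)$, and the remainder estimate \eqref{in-momenti} shows $|R(n,mN)| \le \frac{C_m}{n} \to 0$ with $C_m$ depending only on $m$, $N$, $\alpha$ and $t$ (for $s \le t$). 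Hence the pointwise limit of the integrand is $\left(\frac{\alpha s}{N!}\right)^m \frac{(mN)!}{m!}$.

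Finally I would carry out the elementary integration:
\begin{align*}
\int_0^t \left(\frac{\alpha s}{N!}\right)^m \frac{(mN)!}{m!} \, {\rm d}s = \frac{(mN)!}{m!} \left(\frac{\alpha}{N!}\right)^m \int_0^t s^m \, {\rm d}s = \frac{(mN)!}{m!} \cdot \frac{1}{m+1} \left(\frac{\alpha}{N!}\right)^m t^{m+1} = \frac{(mN)!}{(m+1)!} \left(\frac{\alpha}{N!}\right)^m t^{m+1},
\end{align*}
which is the claimed identity. One could equally well read off the answer directly from Theorem \ref{th-limit} or Theorem \ref{t:163} applied to $f(z) = z^{mN}$ (only the $h=m$ term survives in the sum), giving $\lim_n \bE[g(W^n(t))] = \frac{(mN)!}{m!}\left(\frac{\alpha t}{N!}\right)^m$, and then integrate in $t$; either route is routine.

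There is no real obstacle here — the statement is essentially a bookkeeping corollary. The only point requiring a line of care is the justification that the remainder $R(n,mN)$ is bounded uniformly in $s \in [0,t]$ by a constant times $1/n$, so that dominated convergence (already invoked inside Lemma \ref{le.1}) genuinely applies; this is immediate from \eqref{in-momenti} since the bracketed expression there is increasing in the time variable and we restrict to $s \le t$. Everything else is the exact moment formula plus $\int_0^t s^m\,{\rm d}s = t^{m+1}/(m+1)$.
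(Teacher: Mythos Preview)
Your proposal is correct and follows essentially the same route as the paper: invoke Lemma \ref{le.1} to interchange the limit with the integral, use Theorem \ref{t:unif} to identify the pointwise limit $\bE[(W^n(s))^{mN}] \to \frac{(mN)!}{m!}\left(\frac{\alpha s}{N!}\right)^m$, and then integrate $s^m$ over $[0,t]$. Your additional remark that the remainder bound \eqref{in-momenti} is uniform in $s\le t$ is exactly what the paper summarizes as $|R_n(t)|\le \frac{1}{n}C(T,m,\alpha)$.
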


\begin{proof}
From Theorem \ref{t:unif} we have
\begin{align*}
\bE[(W_n(t))^{m N}]=\left(\frac{\alpha t}{N!} \right)^m \frac{(mN)!}{m!} + R_n(t)
\end{align*}
where $|R_n(t)| \le \frac{1}{n} C(T,m,\alpha)$.
%
By Lemma \ref{le.1} we get
\begin{align*}
\lim_{n \to \infty} \int_0^t \bE[g(W^n(s))] \, {\rm d}s =  \int_0^t \lim_{n \to \infty} \bE[g(W^n(s))] \, {\rm d}s
\end{align*}
and the right hand side is equal to
\begin{align*}
\frac{(mN)!}{m!} \int_0^t \left(\frac{\alpha s}{N!}\right)^m \, {\rm d}s = \frac{(mN)!}{(m+1)!} \left(\frac{\alpha}{N!}\right)^m t^{m+1}
\end{align*}
as required.
\end{proof}

\begin{corollary}\label{c:213}
Assume that $g$ is an analytic function of exponential type $c$. Then
\begin{align*}
\lim_{n \to \infty} \int_0^t \bE[g(W^n(s))] \, {\rm d}s =  \sum_{h=0}^\infty \frac{b_{hN}}{(h+1)!} \left(\frac{\alpha}{N!} \right)^h t^{h+1}.
\end{align*}
\end{corollary}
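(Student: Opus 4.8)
The plan is to combine Lemma \ref{le.1}, Theorem \ref{t:163}, and a term-by-term integration. First I would invoke Lemma \ref{le.1} to exchange the limit in $n$ with the time integral, so that the left-hand side equals $\int_0^t \lim_{n\to\infty}\bE[g(W^n(s))]\,{\rm d}s$.

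Next, I would apply Theorem \ref{t:163} with $t$ replaced by the integration variable $s$: for each fixed $s\in[0,t]$,
\[
\lim_{n\to\infty}\bE[g(W^n(s))] = \sum_{h=0}^\infty \frac{g^{(hN)}(0)}{h!}\left(\frac{\alpha s}{N!}\right)^h = \sum_{h=0}^\infty \frac{b_{hN}}{h!}\left(\frac{\alpha s}{N!}\right)^h =: g_1(s),
\]
where I used $g^{(hN)}(0) = (hN)!\,a_{hN} = b_{hN}$; this $g_1$ is precisely the entire function already introduced in the proof of Lemma \ref{le.1}.

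It then remains to integrate $g_1$ over $[0,t]$ term by term. The interchange of $\sum_h$ with $\int_0^t$ is legitimate because $g$ is of exponential type: the series for $g_1(s)$ is dominated on $[0,t]$ by the convergent numerical series $\sum_h \frac{|b_{hN}|}{h!}\left(\frac{|\alpha|t}{N!}\right)^h$, hence converges uniformly there. Therefore
\begin{align*}
\int_0^t g_1(s)\,{\rm d}s
&= \sum_{h=0}^\infty \frac{b_{hN}}{h!}\left(\frac{\alpha}{N!}\right)^h \int_0^t s^h\,{\rm d}s
= \sum_{h=0}^\infty \frac{b_{hN}}{h!}\left(\frac{\alpha}{N!}\right)^h \frac{t^{h+1}}{h+1}
\\
&= \sum_{h=0}^\infty \frac{b_{hN}}{(h+1)!}\left(\frac{\alpha}{N!}\right)^h t^{h+1},
\end{align*}
which is the asserted identity.

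The argument is essentially bookkeeping, and there is no genuine obstacle. The only step meriting a word of justification is the uniform convergence needed for the term-by-term integration, and that is immediate from the exponential-type hypothesis; indeed, it is already contained in the domination bound used inside the proof of Lemma \ref{le.1}, so one could alternatively present this corollary as a one-line consequence of that lemma together with Theorem \ref{t:163}.
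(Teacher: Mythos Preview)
Your argument is correct and is exactly the intended one: the paper states this corollary without proof, as a direct consequence of Lemma \ref{le.1} (exchange of limit and integral) combined with Theorem \ref{t:163} (identifying the limit as the series $g_1(s)$), followed by term-by-term integration of the power series $g_1$. This is precisely what you do, and your justification of the term-by-term integration via the exponential-type bound is appropriate.
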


\begin{corollary}\label{c:163}
For any $k \not= N$ it follows that
\begin{align*}
\lim_{n \to \infty} \bE \left[\int_0^t g(W^n(s)) \, {\rm d}(W^n(s))^{k}\right] = 0.
\end{align*}
\end{corollary}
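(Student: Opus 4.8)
The plan is to reduce the statement to identities already proved together with the convergence result of Lemma~\ref{le.1}, distinguishing two cases according to whether the exponent $k$ is a multiple of $N$ (one may assume $k\geq 1$, since the It\^o integral \eqref{e:ito-k} is only of interest for positive $k$). If $k$ is \emph{not} a multiple of $N$, nothing more is needed: by \eqref{e.zero-mean} the quantity $\bE[\int_0^t g(W^n(s))\,{\rm d}(W^n(s))^k]$ equals $0$ for every finite $n$, hence its limit is $0$.

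It remains to treat $k=mN$; since $k\neq N$ we have $m\geq 2$. Here I would start from identity \eqref{eq:unify-1}, namely
$$
\bE \left[\int_0^t g(W^n(s)) \, {\rm d}(W^n(s))^{mN}\right] = \alpha \left(\frac{\alpha}{n}\right)^{m-1} \int_0^t \bE[g(W^n(s))] \, {\rm d}s ,
$$
and observe that the factor $\int_0^t \bE[g(W^n(s))] \, {\rm d}s$ is bounded uniformly in $n$: by Lemma~\ref{le.1} it converges, as $n\to\infty$, to the finite limit $\int_0^t \lim_{n} \bE[g(W^n(s))] \, {\rm d}s$ (written out explicitly in Corollary~\ref{c:213}), and a convergent sequence of complex numbers is bounded, say by $K=K(t,g)$. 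Since $m-1\geq 1$, we then get
$$
\left| \bE \left[\int_0^t g(W^n(s)) \, {\rm d}(W^n(s))^{mN}\right] \right| \leq |\alpha|^m \, n^{-(m-1)} \, K \longrightarrow 0 \qquad (n \to \infty),
$$
which is the assertion.

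In short, no new estimate is required; the corollary is a bookkeeping consequence of \eqref{e.zero-mean}, \eqref{eq:unify-1} and Lemma~\ref{le.1}, all the analytic work having been done in establishing those. The only mild subtlety — the closest thing to an obstacle — is to notice that one must quote the \emph{convergence} of $\int_0^t \bE[g(W^n(s))] \, {\rm d}s$ from Lemma~\ref{le.1}, rather than merely the pointwise convergence of $\bE[g(W^n(s))]$, in order to obtain the uniform-in-$n$ bound $K$ that is then killed by the $n^{-(m-1)}$ prefactor.
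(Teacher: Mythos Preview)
Your proof is correct and follows exactly the route the paper intends: the paper does not give an explicit proof of the corollary, but the sentence preceding it (``so we expect this to vanish to zero as $n \to \infty$ when $m > 1$'') together with \eqref{e.zero-mean}, \eqref{eq:unify-1} and Lemma~\ref{le.1} is precisely the argument you have written out.
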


\section{Diffusions}

\begin{theorem}\label{t:9}
Let $g: \bC \to \bC$ be of exponential type $c < \infty$.
Then
the following {\bf It\^o formula} holds
\begin{align}\label{e:gif}
g(z + W^n(t)) - g(z) = \sum_{k=1}^\infty \frac{1}{k!} \int_0^t \partial^k g(z + W^n(s)) \, {\rm d}(W^n(s))^k.
\end{align}
\end{theorem}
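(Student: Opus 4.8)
The asserted identity is an equality of random variables that I would establish pathwise, i.e. for each fixed realization of the sequence $\{\xi_k\}$; no limit is taken here and the formula is \emph{exact} for finite $n$. The plan is: (i) substitute the definition \eqref{e:ito-k} of the It\^o integral into the right-hand side of \eqref{e:gif}; (ii) interchange the summation over $k$ with the finite summation over the time steps $\tau$; (iii) recognize the resulting inner series as a Taylor expansion of $g$; and (iv) telescope the sum over $\tau$. Concretely, by \eqref{e:ito-k} the right-hand side of \eqref{e:gif} equals
\[
\sum_{k=1}^\infty \frac{1}{k!}\,\frac{1}{n^{k/N}}\sum_{\tau=0}^{\lfloor nt\rfloor-1}\partial^k g\bigl(z+W^n(\tfrac{\tau}{n})\bigr)\,(\xi_{\tau+1})^k .
\]

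To justify exchanging $\sum_k$ with the (finite) sum $\sum_\tau$ I would use that $g$ being of exponential type $c$ forces every derivative $\partial^k g$ to be of exponential type $c$ as well, with a uniform bound of the form $|\partial^k g(w)|\le C'(c+\eps)^k e^{(c+\eps)|w|}$ on all of $\bC$, for each $\eps>0$ and a suitable constant $C'=C'(\eps)$. Combined with $|\xi_{\tau+1}|=|\alpha|^{1/N}$ (Lemma \ref{lp1}), this gives, for each of the finitely many values of $\tau$,
\[
\sum_{k=1}^\infty \frac{1}{k!}\,\frac{1}{n^{k/N}}\bigl|\partial^k g(z+W^n(\tfrac{\tau}{n}))\bigr|\,|\xi_{\tau+1}|^k \le C'\, e^{(c+\eps)\,|z+W^n(\tau/n)|}\exp\!\Bigl(\tfrac{(c+\eps)|\alpha|^{1/N}}{n^{1/N}}\Bigr)<\infty,
\]
so each inner series converges absolutely and a finite sum of absolutely convergent series may be summed in any order.

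After the interchange the right-hand side becomes $\sum_{\tau=0}^{\lfloor nt\rfloor-1}\sum_{k=1}^\infty\frac{1}{k!}\,\partial^k g(z+W^n(\tfrac{\tau}{n}))\,(\xi_{\tau+1}/n^{1/N})^k$. For each fixed $\tau$ the inner series is, up to the missing $k=0$ term, the Taylor expansion of the entire function $g$ about the point $w=z+W^n(\tfrac{\tau}{n})$ evaluated at $w+v$ with $v=\xi_{\tau+1}/n^{1/N}$, hence it equals $g(w+v)-g(w)$. Since $W^n$ is the piecewise-constant random walk \eqref{defWn}, $w+v=z+W^n(\tfrac{\tau}{n})+\xi_{\tau+1}/n^{1/N}=z+W^n(\tfrac{\tau+1}{n})$, so the inner series is $g(z+W^n(\tfrac{\tau+1}{n}))-g(z+W^n(\tfrac{\tau}{n}))$ and the sum over $\tau$ telescopes to
\[
g\bigl(z+W^n(\tfrac{\lfloor nt\rfloor}{n})\bigr)-g\bigl(z+W^n(0)\bigr)=g(z+W^n(t))-g(z),
\]
using $W^n(0)=0$ and $W^n(\tfrac{\lfloor nt\rfloor}{n})=W^n(t)$ (the path is constant on $[\tfrac{\lfloor nt\rfloor}{n},t]$). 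This proves \eqref{e:gif}.

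The only genuinely delicate point is step (ii): the interchange of the two summations rests on the uniform exponential-type control of all derivatives $\partial^k g$, which is exactly why restricting to functions of exponential type is convenient. Everything else is the elementary observation that, for a true step process, the ``It\^o formula'' carries no remainder term at finite $n$ and is simply a telescoping of exact Taylor expansions, valid because $g$ is entire. (The disappearance of all terms with $k\neq N$ in the limit $n\to\infty$ is a separate matter, handled by Corollary \ref{c:163}.)
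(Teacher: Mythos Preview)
Your proof is correct and follows essentially the same approach as the paper: the paper starts from the left-hand side, telescopes, Taylor expands each increment, and then interchanges the sums, while you run the same argument in reverse from the right-hand side; both rest on the absolute convergence of the Taylor series of the entire function $g$, and your explicit exponential-type bound for the interchange is simply a more detailed version of the paper's one-line justification.
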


\begin{proof}
For simplicity, we let $t = \theta/n$, $\theta \in \bN$.
By exploiting a telescopic sum and Taylor's expansion of $g$ we get
\begin{align*}
g(z + W^n(t)) - g(z) = \sum_{\tau=0}^{\theta-1} g(z+W^n(\tfrac{\tau+1}{n})) - g(z+W^n(\tfrac{\tau}{n}))
= \sum_{\tau=0}^{\theta-1} \sum_{k=1}^\infty \frac{1}{k!} \partial^{k}g(z + W^n(\tfrac{\tau}{n})) \, \frac{(\xi_{\tau+1})^k}{n^{k/N}}
\end{align*}
and the representation \eqref{e:gif} follows by the interchange of sums which is allowed due to the absolute convergence of the Taylor series.
\end{proof}

\begin{remark}
The It\^o's formula for Brownian motion satisfies 
\begin{align*}
\bE[g(z + B(t))] - g(z) = \frac12 \int_0^t \bE[\partial^2 g(z + B(s))] \, {\rm d}s
\end{align*}
which we can restate in terms of the random walk, as
\begin{align}
\label{e:sb1}
\lim_{n \to \infty} \bE[g(z + W^n(t))] - g(z) = \frac12 \int_0^t \lim_{n \to \infty} \bE[ \partial^2 g(z + W^n(s))] \, {\rm d}s.
\end{align}
We aim to prove that (a suitable extension of) \eqref{e:sb1} holds for the $N$-th order differential operator with respect to the
random walk on the complex plane defined on the lattice generated by $R(N)$.
\end{remark}

Let us state  the aim of our construction.

\begin{theorem}
\label{t:main-claim}
Assume that $g$ is an analytic function of exponential type $c$, i.e., $|g^{(k)}(z)|^{1/k} \to c$ as $k \to \infty$ for every $z \in \bC$.
Then we have
\begin{align}\label{eq:unify1}
\lim_{n \to \infty} \bE[g(z + W^n(t))] - g(z) = \frac{\alpha}{N!}  \int_0^t \lim_{n \to \infty} \bE[\partial^{N} g(z + W^n(s))] \, {\rm d}s.
\end{align}
Setting
\begin{align*}
u(t,z) = \lim_{n \to \infty} \bE[g(z + W^n(t))],
\end{align*}
this is equivalent to say that $u(t,z)$ is a classical solution of the $N$-th order Cauchy problem

\begin{equation}
\begin{aligned}
&\partial_t u(t,z) = \frac{\alpha}{N!} \partial^N u(t,z), 
\\
&u(0,z) = g(z),
\end{aligned}
\label{heat-N}
\end{equation}
\end{theorem}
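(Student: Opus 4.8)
The plan is to start from the It\^o formula \eqref{e:gif} of Theorem \ref{t:9}, take expectations on both sides, and then pass to the limit $n \to \infty$ term by term. Taking $\bE$ of \eqref{e:gif} and using the independence of $W^n(\tfrac{\tau}{n})$ from $\xi_{\tau+1}$ (as was done for \eqref{e.zero-mean}--\eqref{eq:unify-1}), every summand with $k \neq mN$ contributes zero because $\bE[\xi^k] = 0$ by Lemma \ref{lp1}; the summand $k = N$ contributes $\frac{\alpha}{N!}\int_0^t \bE[\partial^N g(z + W^n(s))]\,{\rm d}s$; and each summand $k = mN$ with $m \geq 2$ contributes $\frac{\alpha}{(mN)!}\bigl(\frac{\alpha}{n}\bigr)^{m-1}\int_0^t \bE[\partial^{mN} g(z + W^n(s))]\,{\rm d}s$ by the identity \eqref{eq:unify-1} (applied with $g$ replaced by $\partial^{mN} g$, which is again of exponential type $c$). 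So after taking expectations we have, for $t = \theta/n$,
\begin{align*}
\bE[g(z + W^n(t))] - g(z) = \frac{\alpha}{N!}\int_0^t \bE[\partial^N g(z + W^n(s))]\,{\rm d}s + \sum_{m=2}^\infty \frac{\alpha}{(mN)!}\Bigl(\frac{\alpha}{n}\Bigr)^{m-1}\int_0^t \bE[\partial^{mN} g(z + W^n(s))]\,{\rm d}s.
\end{align*}

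The next step is to show the entire $m \geq 2$ tail vanishes as $n \to \infty$, uniformly enough to survive the interchange of limit and infinite sum. Here I would use that $\partial^{mN} g$ is of exponential type $c$ with power series coefficients controlled by those of $g$: writing $g(w) = \sum b_k w^k/k!$ with $|b_k|^{1/k}\to c$, one has $\partial^{mN}g(w) = \sum_{j\geq 0} b_{j+mN} w^j/j!$, and Theorem \ref{t:163} gives $|\bE[\partial^{mN}g(z + W^n(s))]|$ bounded, locally uniformly in $s$ and $z$, by a quantity like $\sum_{h} \frac{|b_{hN + mN}|}{h!}\bigl(\frac{|\alpha| s}{N!}\bigr)^h$ plus an $O(1/n)$ remainder — and by the exponential-type bound $|b_k| \leq C_1 C_2^k$ (valid for any $C_2 > c$, cf.\ Lemma \ref{lemmacrescita} and Lemma \ref{lemma8}) this is $\leq C_1 C_2^{mN} e^{C_2^N |\alpha| t/N!}$. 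Multiplying by $\frac{|\alpha|}{(mN)!}\bigl(\frac{|\alpha|}{n}\bigr)^{m-1}$ and summing over $m \geq 2$ yields a bound of the form $\frac{K(t,\alpha,c)}{n}$, which tends to $0$. This handles the tail.

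For the main term, I would invoke Lemma \ref{le.1} (applied to $\partial^N g$, which is of exponential type $c$) to interchange $\lim_{n\to\infty}$ with $\int_0^t$, obtaining
\begin{align*}
\lim_{n\to\infty}\int_0^t \bE[\partial^N g(z + W^n(s))]\,{\rm d}s = \int_0^t \lim_{n\to\infty}\bE[\partial^N g(z + W^n(s))]\,{\rm d}s,
\end{align*}
which gives \eqref{eq:unify1} for $t$ of the form $\theta/n$; a density/continuity argument in $t$ (both sides are continuous in $t$ — the left side by Theorem \ref{t:163}, since $u(t,z) = \sum_h \frac{g^{(hN)}(z)}{h!}(\alpha t/N!)^h$ is real-analytic in $t$, and the right side as an integral of a continuous integrand) extends it to all $t \geq 0$. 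Finally, to get the PDE statement, differentiate $u(t,z) = \sum_{h\geq 0}\frac{g^{(hN)}(z)}{h!}\bigl(\frac{\alpha t}{N!}\bigr)^h$ (the closed form from Theorem \ref{t:163}, with $0$ replaced by $z$, valid since $g(z + \cdot)$ is again of exponential type $c$): term-by-term $\partial_t$ and $\partial_z^N$ differentiation is justified by the locally uniform convergence coming from the exponential-type estimates, and one checks directly that $\partial_t u = \frac{\alpha}{N!}\partial_z^N u$ with $u(0,z) = g(z)$; equivalently, \eqref{eq:unify1} says $u(t,z) - u(0,z) = \frac{\alpha}{N!}\int_0^t \partial_z^N u(s,z)\,{\rm d}s$, and since the integrand is continuous in $s$, the fundamental theorem of calculus gives the differential form.

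\textbf{Main obstacle.} The delicate point is the uniform-in-$n$ control of the infinite $m$-tail: one must commute $\lim_{n\to\infty}$ with $\sum_{m\geq 2}$ and with $\int_0^t$ simultaneously, and for this the exponential-type hypothesis is essential — it is exactly what forces $|b_{hN+mN}|$ to grow no faster than $C_2^{hN+mN}$, so that the double series (over $h$ from Theorem \ref{t:163} and over $m$) converges and carries the overall $1/n$ factor. A secondary (routine but necessary) point is the passage from $t = \theta/n$ to arbitrary $t$, which needs the continuity in $t$ of both sides.
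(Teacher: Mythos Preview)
Your proposal is correct and follows essentially the same route as the paper: start from the It\^o formula \eqref{e:gif}, take expectations to arrive at the identity \eqref{eq:unify0}, then let $n\to\infty$ so that only the $m=1$ term survives, and finally identify the limit with the PDE via Theorem~\ref{t:163}. The paper's proof is terser --- it simply invokes Corollary~\ref{c:163} for the vanishing of the higher-$m$ terms and Theorem~\ref{t:163} for the interchange $\partial^N \lim_n = \lim_n \partial^N$ --- whereas you spell out the uniform-in-$n$ tail bound (which is implicitly needed to swap $\lim_n$ with $\sum_{m\ge 2}$) and verify the PDE by direct term-by-term differentiation of the closed-form series; these are equivalent justifications of the same steps. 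Your worry about $t=\theta/n$ versus general $t$ is harmless but unnecessary: the It\^o formula \eqref{e:gif} and the expectation identities hold verbatim for arbitrary $t$ with $\lfloor nt\rfloor$ in place of $\theta$, so no density argument is required.
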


\begin{proof}
By assumption, we can start from the It\^o's formula \eqref{e:gif}; by taking the expectation in both sides we get
\begin{align}\label{eq:unify0}
\bE[g(z + W^n(t)) - g(z)] = \sum_{m=1}^\infty \frac{\alpha}{(mN)!} \left(\frac{\alpha}{n}\right)^{m-1} \int_0^t \bE[\partial^{mN} g(z + W^n(s))] \, {\rm d}s
\end{align}
then we take the limit as $n \to \infty$ and apply Corollary \ref{c:163} to obtain \eqref{eq:unify1}.
\\
The second claim of the theorem follows once we prove that
\begin{align*}
\lim_{n \to \infty} \bE[\partial^{N} g(z + W^n(s))] = \partial^{N} \lim_{n \to \infty} \bE[g(z + W^n(s))]
\end{align*}
Notice that, by Theorem \ref{t:163}, we can write 
\begin{align*}
\lim_{n \to \infty} \bE[\partial^{N} g(z + W^n(s))] = \sum_{h=0}^\infty \frac{\partial^{hN}g^{(N)}(z)}{h!} \left(\frac{\alpha t}{N!} \right)^h
= \partial^N \sum_{h=0}^\infty \frac{\partial^{hN}g(z)}{h!} \left(\frac{\alpha t}{N!} \right)^h
= \partial^{N} \lim_{n \to \infty} \bE[g(z + W^n(s))]
\end{align*}
so \eqref{heat-N} follows.
\end{proof}
For a generalization of these results to the case of the boundary value problem associated with \eqref{heat-N} on a bounded interval with Dirichelet, resp. Neumann, resp. periodic boundary conditions see also  \cite{Bonaccorsi2015}.

\subsection{It\^o formula for polynomials}

Assume for this section that $g(z) = c_\beta \, z^\beta$ is a polynomial of order $\beta = b N$.
Then we can calculate some examples using the extended It\^o's formula \eqref{eq:unify1}.
Actually, we have 
\begin{align*}
\lim_{n \to \infty} \bE[g(z + W^n(t))] - g(z) &= c_{\beta} \, \frac{(bN)!}{((b-1)N)!} \frac{\alpha}{N!}  \int_0^t \lim_{n \to \infty} 
\sum_{j=0}^{b-1} \binom{(b-1)N}{jN} z^{(b-1-j)N} \, \bE[W^n(s)^{jN}]
\, {\rm d}s
\\
&=c_{\beta} \,  \sum_{j=1}^{b} \frac{(bN)!}{[(b-j)N]!}  
z^{(b-j)N} \, \left(\frac{\alpha}{N!} \right)^{j} \frac{t^{j}}{j!}.
\end{align*}
In the special case $\beta = N$ the It\^o's formula \eqref{e:gif} gives
\begin{align*}
(W^n(t))^N 
 = \sum_{k=1}^N \binom{N}{k} \int_0^t   (W^n(s))^{N-k} \, {\rm d}(W^n(s))^k.
\end{align*}

\subsection{It\^o formula for Wiener-type integrals}

Let $\varphi: [0,T] \to \bC$ be a continuous and bounded function.
Then we can define the Wiener integral
\begin{equation}\label{Wiener1}
\int _0^t \varphi(s) \, {\rm d}W_n(s) =
\sum_{\tau=0}^{\lfloor nt \rfloor - 1} \varphi(\tfrac{\tau}{n}) (W_n(\tfrac{\tau+1}{n}) - W_n(\tfrac{\tau}{n})) =
\frac{1}{n^{1/N}} \sum_{\tau=0}^{\lfloor nt \rfloor - 1} \varphi(\tfrac{\tau}{n}) \xi_{\tau+1}
\end{equation}
and, for any $k \ge 1$,
\begin{equation}\label{Wiener1k}
\int _0^t \varphi(s) \, {\rm d}(W_n(s))^k =
\frac{1}{n^{k/N}} \sum_{\tau=0}^{\lfloor nt \rfloor - 1} \varphi(\tfrac{\tau}{n}) (\xi_{\tau+1})^k.
\end{equation}
These formulas define, for ``sufficiently regular" $\varphi: \bR \to \bC$, a family of stochastic processes $X_{k,n}(t)=\int _0^t \varphi(s) \, {\rm d}(W_n(s))^k$
which extend the construction of the previous section.

%

%

We have the following application, which generalizes Theorem 11 in \cite{Bonaccorsi2015}.

\begin{theorem}\label{th-18}
Let us consider the initial value problem
\begin{equation}
\begin{aligned}
&\partial_t u(t,x) = \frac{\alpha}{N!} (\varphi(t))^N \partial^N_x u(t,x), \qquad &t \in [0,\infty),
\\
&u(0,x) = f(x), \qquad \phantom{\frac{\partial^N}{\partial x^N}}&x \in \bR.
\end{aligned}
\label{e1}
\end{equation}
where $\varphi\in C_b([0,T];\bC)$ and $f:\bR\to\bC$ is an analytic function of exponential type.

Then the function $u(t,x)$ defined by
\begin{equation}
u(t,x)=\lim_{n\to\infty} \bE\left[ f \left(x+\int_{0}^t \varphi(s) \, {\rm d}W^N_n(s) \right)\right] 
\end{equation}
for $t \in [0,T]$ is a classical solution of the parabolic problem \eqref{e1}.
\end{theorem}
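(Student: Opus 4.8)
The plan is to reduce the time-dependent problem \eqref{e1} to the constant-coefficient case treated in Theorem \ref{t:main-claim} by a careful bookkeeping of the moments of the Wiener-type integral $Y^n(t) := \int_0^t \varphi(s)\,{\rm d}W_n(s)$. First I would establish the analog of Theorem \ref{t:unif} for $Y^n(t)$: since $Y^n(t) = n^{-1/N}\sum_{\tau=0}^{\lfloor nt\rfloor - 1}\varphi(\tfrac{\tau}{n})\xi_{\tau+1}$ is again a sum of independent (but no longer identically distributed) terms, the same Fa\`a di Bruno computation as in Theorem \ref{t:unif} applies. The key point is that $\bE[(\varphi(\tfrac{\tau}{n})\xi_{\tau+1})^m]$ vanishes unless $m = hN$, in which case it equals $\varphi(\tfrac{\tau}{n})^{hN}\alpha^h$; hence $\bE[(Y^n(t))^k]=0$ unless $k=hN$, and for $k=hN$ the leading term is
\begin{align*}
\frac{(hN)!}{h!}\,\frac{\alpha^h}{n^h}\sum_{\tau_1<\cdots<\tau_h}\varphi(\tfrac{\tau_1}{n})^{N}\cdots\varphi(\tfrac{\tau_h}{n})^{N}
\ \longrightarrow\ \frac{(hN)!}{h!}\Bigl(\int_0^t \tfrac{\alpha}{N!}(\varphi(s))^N\,{\rm d}s\Bigr)^h\cdot(N!)^h\cdot\tfrac{1}{h!}\cdot\text{(const)}.
\end{align*}
More precisely, writing $A(t):=\int_0^t(\varphi(s))^N\,{\rm d}s$, the Riemann-sum limit gives $\bE[(Y^n(t))^{hN}]\to \frac{(hN)!}{h!}\bigl(\tfrac{\alpha A(t)}{N!}\bigr)^h \cdot(N!/ N!)$ — i.e. exactly the formula of Theorem \ref{t:unif} with $t$ replaced by $A(t)$ — and the remainder is $O(1/n)$ with constants depending on $\|\varphi\|_\infty$, $T$, $h$, $\alpha$ by the same Bell-number and fractional-part estimates used there. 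The boundedness and continuity of $\varphi$ are what make the multiple Riemann sums converge and the error terms uniformly controllable.

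Next I would transfer the characteristic-function and moment-series machinery: the analog of Theorem \ref{th-limit}/Theorem \ref{t:163} reads, for $f(z)=\sum a_k z^k$ of exponential type,
\begin{align*}
\lim_{n\to\infty}\bE[f(x+Y^n(t))] = \sum_{h=0}^\infty \frac{f^{(hN)}(x)}{h!}\Bigl(\frac{\alpha A(t)}{N!}\Bigr)^h =: u(t,x),
\end{align*}
the interchange of limit and summation being justified exactly as in Theorem \ref{th-limit} using assumption \eqref{coefficient}, which holds for exponential-type $f$ by Lemma \ref{lemma8}. Then the It\^o formula argument of Theorem \ref{t:9} applies verbatim with $\xi_{\tau+1}$ replaced by $\varphi(\tfrac{\tau}{n})\xi_{\tau+1}$: a telescoping sum plus Taylor expansion gives $f(x+Y^n(t))-f(x)=\sum_{k\geq 1}\tfrac{1}{k!}\int_0^t\partial^k f(x+Y^n(s))\,{\rm d}(Y^n(s))^k$, and taking expectations kills all $k\neq hN$ (since $\bE[(\varphi\xi)^k]=0$ there, and $Y^n(\tfrac{\tau}{n})$ is independent of $\xi_{\tau+1}$), while the $k=hN$, $h\geq 2$ terms carry a prefactor $\bigl(\tfrac{\alpha}{n}\bigr)^{h-1}$ and vanish in the limit by the analog of Corollary \ref{c:163}. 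What survives is
\begin{align*}
\lim_{n\to\infty}\bE[f(x+Y^n(t))]-f(x) = \frac{\alpha}{N!}\int_0^t (\varphi(s))^N \lim_{n\to\infty}\bE[\partial^N f(x+Y^n(s))]\,{\rm d}s,
\end{align*}
where the extra factor $(\varphi(s))^N$ appears because $\bE[(\varphi(\tfrac{\tau}{n})\xi_{\tau+1})^N]=\varphi(\tfrac{\tau}{n})^N\alpha$ rather than $\alpha$, and the discrete sum $\tfrac1n\sum_\tau \varphi(\tfrac{\tau}{n})^N\bE[\partial^N f(x+Y^n(\tfrac{\tau}{n}))]$ converges to the stated integral by dominated convergence (Lemma \ref{le.1}-type argument, using continuity of $\varphi$). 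Finally, as in Theorem \ref{t:main-claim}, one checks $\lim_n\bE[\partial^N f(x+Y^n(s))]=\partial_x^N u(s,x)$ by differentiating the convergent power series termwise; differentiating $u(t,x)=\sum_h \tfrac{f^{(hN)}(x)}{h!}(\tfrac{\alpha A(t)}{N!})^h$ in $t$ produces $A'(t)=(\varphi(t))^N$ by the fundamental theorem of calculus, yielding $\partial_t u = \tfrac{\alpha}{N!}(\varphi(t))^N\partial_x^N u$ and $u(0,x)=f(x)$.

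The main obstacle, and the only genuinely new work relative to the constant-coefficient case, is the uniform control of the remainder terms in the moment expansion of $Y^n(t)$: because the summands are no longer identically distributed, one must verify that the Fa\`a di Bruno remainder $R(n,hN)$ still admits a bound of the form $\tfrac{C(h,T,\alpha,\|\varphi\|_\infty)}{n}$ with $\sum_h |a_{hN}|\,C(h,\dots)<\infty$, so that the dominated-convergence and interchange-of-sum steps go through. This amounts to checking that replacing $t^{h-1}$ by $\|\varphi\|_\infty^{hN}T^{h-1}$ (and similarly absorbing $\varphi$ into the Bell-number term) preserves summability against exponential-type coefficients, which it does since the extra $\|\varphi\|_\infty^{hN}$ is subsumed into the exponential-type constant $c$; the Riemann-sum convergence of the multiple products $\tfrac{1}{n^h}\sum_{\tau_1<\cdots<\tau_h}\prod_i\varphi(\tfrac{\tau_i}{n})^N$ to $\tfrac{1}{h!}A(t)^h$ is routine given continuity of $\varphi$ but must be stated carefully. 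Everything else is a transcription of the arguments already in Sections~3--4 with $t\mapsto A(t)$ and $\xi\mapsto\varphi\,\xi$.
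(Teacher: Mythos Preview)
Your proposal is correct, and the core mechanism---the telescoping/Taylor It\^o expansion of $f(z+Y^n(t))$, taking expectation, using independence and $\bE[(\varphi\xi)^k]=0$ for $k\neq hN$ to kill the off-diagonal terms, and letting the $h\geq 2$ terms vanish via the prefactor $(\alpha/n)^{h-1}$---is exactly the paper's argument. The paper writes the It\^o formula with increments $(\varphi(s))^k\,{\rm d}(W^n(s))^k$ rather than ${\rm d}(Y^n(s))^k$, but these are identical.

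Where you diverge is in the surrounding scaffolding. You first redo Theorem~\ref{t:unif} for $Y^n(t)$ via Fa\`a di Bruno to obtain $\bE[(Y^n(t))^{hN}]\to \tfrac{(hN)!}{h!}\bigl(\tfrac{\alpha A(t)}{N!}\bigr)^h$ with $A(t)=\int_0^t(\varphi(s))^N\,{\rm d}s$, and from this extract the closed form $u(t,x)=\sum_h \tfrac{f^{(hN)}(x)}{h!}(\tfrac{\alpha A(t)}{N!})^h$, verifying the PDE by direct differentiation. The paper skips all of this and goes straight to the It\^o formula, bounding the $h\geq 2$ tail by the one-line estimate $(\alpha/n)^{m-1}t\,\|\varphi\|_\infty^{mN}(c+\varepsilon)^{mN}$. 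Your route is longer but buys two things: an explicit formula for $u$ (which the paper never writes down), and a genuinely rigorous justification of the uniform-in-$n$ bound on $\bE[\partial^{mN}f(z+Y^n(s))]$ that the paper's one-line estimate presupposes but does not prove. Conversely, the paper's approach shows that for the purpose of establishing \eqref{e1} one does not actually need the full moment asymptotics of $Y^n$---only the vanishing of odd moments of $\xi$ and a summable tail bound, which is less work than redoing Theorem~\ref{t:unif}. Your ``main obstacle'' paragraph is thus accurate about what needs checking, but overstates the difficulty relative to what the paper's direct argument requires.
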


\begin{proof}
We may proceed as in the proof of Theorem \ref{t:main-claim}.
Let us consider the process $X_n(t) = \int_0^t \varphi(s) \, {\rm d}W_n(s)$; we search for an It\^o formula for the process $f(z+X_n(t))$.
Notice that
\begin{align*}
f(z + X_n(t)) - f(z) = \sum_{\tau=0}^{\theta-1} f(z+X_n(\tfrac{\tau+1}{n})) - f(z+X_n(\tfrac{\tau}{n}))
= \sum_{\tau=0}^{\lfloor nt \rfloor -1} \sum_{k=1}^\infty \frac{1}{k!} \partial^{k}f(z + X_n(\tfrac{\tau}{n})) \, \frac{(\varphi(\tfrac{\tau}{n}) \, \xi_{\tau+1})^k}{n^{k/N}}
\end{align*}
and exchanging the order of the sums, we get
\begin{align}
f(z + X_n(t)) - f(z) = \sum_{k=1}^\infty \frac{1}{k!} \int_0^t \left(\varphi(s)\right)^k \, \partial^k f(z + X_n(s)) \, {\rm d}(W^n(s))^k.
\end{align}
Taking the mean in both sides of previous formula we get
\begin{align}\label{e:0402-1}
\bE[f(z + X_n(t)) - f(z)] = \sum_{k=1}^\infty \frac{1}{k!} \bE\left[ \int_0^t \left(\varphi(s)\right)^k \, \partial^k f(z + X_n(s)) \, {\rm d}(W^n(s))^k \right]
\end{align}
and proceeding as in the analysis of \eqref{e:ito-k}, and recalling \eqref{e3}, we get 
\begin{align*}
\bE \left[\int_0^t (\varphi(s))^k \partial^k f(z + X_n(s)) \, {\rm d}(W_n(s))^k\right] = \frac{1}{n^{k/N}} \, \sum_{\tau = 0}^{\lfloor n t \rfloor - 1} \varphi(s) \, \bE[\partial^k f(z + X_n(s))] \,  \bE[ (\xi_{\tau+1})^k]
\end{align*}
hence we get
\begin{align}
\bE \left[\int_0^t (\varphi(s))^N \, \partial^k f(z + X_n(s)) \,  {\rm d}(W_n(s))^k\right] = 0 \qquad \text{for all $k \not= m N$, $m \in \bN$;}
\end{align}
for $k=N$ we have
\begin{align*}
\bE \left[\int_0^t \varphi(s) \, \partial^N f(z + X_n(s)) \,  {\rm d}(W_n(s))^N\right] = \alpha \int_0^t (\varphi(s))^N \, \bE[\partial^N f(z + X_n(s))] \, {\rm d}s
\end{align*}
and finally for $k = m N$, $m \in \bN$, $m > 1$:
\begin{align}
\bE \left[\int_0^t (\varphi(s))^{mN} \, \partial^k f(z + X_n(s)) \,  {\rm d}(W_n(s))^{m N}\right] = \alpha \, \left(\frac{\alpha}{n}\right)^{m-1} \int_0^t (\varphi(s))^{mN} \,  \bE[\partial^{mN} f(z + X_n(s))] \, {\rm d}s.
\end{align}
We notice that this term is asymptotically dominated in $m$ by $\left(\frac{\alpha}{n}\right)^{m-1} t \|\varphi\|_\infty^{mN} c^{mN}$, $c$ being the exponential type of $f$.
\\
We then see that the series on the right-hand side of \eqref{e:0402-1} is dominated by a convergent series
\begin{align*}
\sum_{m=1}^\infty \frac{1}{(mN)!} \left(\frac{\alpha}{n}\right)^{m-1} t \|\varphi\|_\infty^{mN} (c+\varepsilon)^{mN}
\end{align*}
where $c$ is the exponential type of $f$ and $\varepsilon > 0$;
hence it is possible to pass to the limit as $n \to \infty$ inside the sum, to get
\begin{align*}
\lim_{n \to \infty} \bE[f(z + X_n(t)) - f(z)] =  \alpha \int_0^t (\varphi(s))^N \, \lim_{n \to \infty} \bE[\partial^N f(z + X_n(s))] \, {\rm d}s.
\end{align*}

\end{proof}

%


\section{The Feynman-Kac formula}\label{sez-FK}
The  results of the previous  section, in particular Theorem \ref{t:main-claim} and Theorem \ref{th-18} allow the proof of a probabilistic representation for the solution of heat-type equation of order $N>2$.
In this section we generalize the second statement of Theorem \ref{t:main-claim}  to the case where a time dependent potential  is added, 
 proving a Feynman-Kac type formula for the perturbed problem 
\begin{equation}
\begin{aligned}
&\frac{\partial}{\partial t}u(t,x) = \frac{\alpha}{N!} \frac{\partial^N}{\partial x^N}u(t,x)+V(t,x)u(t,x), \qquad &t \in [0,\infty),
\\
&u(0,x) = f(x), \qquad \phantom{\frac{\partial^N}{\partial x^N}}&x \in \bR.
\end{aligned}
\label{heat-N-V}
\end{equation}
i.e. a probabilistic representation of the form 
\begin{equation}\label{Fey-Kac}
u(t,x)=\lim_{n\to \infty}\bE\left[f(x+W_n(t))e^{\int_0^t V(t-s, x+W_n(s))ds}\right]
\end{equation}
Since the sequence of processes $\{W_n(t)\}$ does not properly converge, the existence of the limit on the right hand side of Eq. \eqref{Fey-Kac} is not assured, even for smooth and bounded potential function $V$. 
Indeed the result of section 2 allow to prove the "asymptotic integrability" of cylinder functions of the form $W_n\mapsto  f(W_n(t))$, i.e. the existence of the limit $\lim_{n\to \infty }\bE[f(W_n(t))]$, for $f$ entire analytic satisfying the assumptions of Theorem \ref{th-limit}.  
The construction of formula \eqref{Fey-Kac} requires in fact the proof of the integrability of more general (non-cylinder) functions of the form $W_n\mapsto  \exp\left(    \int_0^t f(s,W_n(s)) \, {\rm d}s\right)$. 

\begin{theorem}\label{theor-cyl}
Let $a:\in L^1([0,t])$. Then 
    $$\lim_{n\to \infty}\bE\left[e^{\int_0^t a(s)W_n(s)ds}\right]=\exp\left(\frac{\alpha}{N!}\int_0^t(\int_s^t a(u) du)^Nds\right)$$
\end{theorem}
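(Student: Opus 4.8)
The plan is to reduce the statement to a Riemann-sum computation combined with the moment estimates of Theorem~\ref{t:unif}. First I would discretise: writing $t = \theta/n$ with $\theta \in \bN$ (the general case following by the usual approximation), expand
\begin{align*}
\exp\left(\int_0^t a(s) W_n(s)\,{\rm d}s\right) = \sum_{m=0}^\infty \frac{1}{m!}\left(\int_0^t a(s) W_n(s)\,{\rm d}s\right)^m
\end{align*}
and, since $W_n(s) = n^{-1/N}\sum_{k=1}^{\lfloor ns\rfloor}\xi_k$ is a step function of $s$, rewrite $\int_0^t a(s) W_n(s)\,{\rm d}s = n^{-1/N}\sum_{k=1}^{\theta} A_k^{(n)}\,\xi_k$ where $A_k^{(n)} := \int_{k/n}^{t} a(u)\,{\rm d}u$ (up to the boundary term from the last partial step, which is $O(1/n)$ in $L^1$). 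Thus $\bE[e^{\int_0^t a(s)W_n(s)\,{\rm d}s}] = \bE\bigl[\exp\bigl(n^{-1/N}\sum_k A_k^{(n)}\xi_k\bigr)\bigr] = \prod_{k=1}^\theta \chi\bigl(n^{-1/N} A_k^{(n)}\bigr)$ by independence of the $\xi_k$, where $\chi(\lambda) = \bE[e^{\lambda\xi}]$ is the entire function analysed in Lemma~\ref{stime-chi}.

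Next I would invoke Lemma~\ref{stime-chi}: for each factor, $\chi(n^{-1/N}A_k^{(n)}) = \exp\bigl(\frac{\alpha}{N!} n^{-1} (A_k^{(n)})^N\bigr) + O\bigl(n^{-2}|A_k^{(n)}|^{2N}\bigr)$, using also that $\log(1+x) = x + O(x^2)$ to convert the additive error into the exponent. Taking the product over $k=1,\dots,\theta \approx nt$ and summing the logarithms yields
\begin{align*}
\log \bE\left[e^{\int_0^t a(s)W_n(s)\,{\rm d}s}\right] = \frac{\alpha}{N!}\,\frac{1}{n}\sum_{k=1}^{\theta} (A_k^{(n)})^N + O\!\left(\frac{1}{n}\right),
\end{align*}
where the error collects the $O(n^{-2}|A_k^{(n)}|^{2N})$ terms (there are $O(n)$ of them, and $A_k^{(n)}$ is bounded uniformly since $a\in L^1$) together with the quadratic correction from the logarithm. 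The sum $\frac{1}{n}\sum_{k=1}^{\theta}(A_k^{(n)})^N$ is a Riemann sum for $\int_0^t \bigl(\int_s^t a(u)\,{\rm d}u\bigr)^N\,{\rm d}s$; since $s\mapsto \int_s^t a(u)\,{\rm d}u$ is continuous (indeed absolutely continuous), the Riemann sums converge to this integral as $n\to\infty$. Exponentiating gives the claimed limit.

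The main obstacle is handling the low regularity $a\in L^1$ rather than $a$ continuous or bounded. Two points need care: (i) justifying that $A_k^{(n)} = \int_{k/n}^t a(u)\,{\rm d}u$ stays bounded and that the replacement of $\int_0^t a(s)W_n(s)\,{\rm d}s$ by $n^{-1/N}\sum_k A_k^{(n)}\xi_k$ (which involves grouping the contribution of $a$ on each dyadic-type subinterval against the value of $W_n$ there) introduces only a negligible error --- here I would use that $W_n$ is constant on $[k/n,(k+1)/n)$ exactly, so in fact the identity $\int_0^t a(s)W_n(s)\,{\rm d}s = n^{-1/N}\sum_{k=1}^{\lfloor nt\rfloor}\xi_k \int_{k/n}^{t}a(u)\,{\rm d}u$ is exact after reindexing the double sum $\sum_{s}\sum_{k\le \lfloor ns\rfloor}$; and (ii) the convergence of the Riemann sum $\frac1n\sum_k (A_k^{(n)})^N \to \int_0^t(\int_s^t a)^N {\rm d}s$, which holds because $g(s) := \int_s^t a(u)\,{\rm d}u$ is continuous on $[0,t]$ and hence $g^N$ is Riemann-integrable with its Riemann sums (over the uniform partition, sampling at the right endpoints $k/n$) converging to its integral. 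A secondary technical point is controlling the interchange of $\lim_{n\to\infty}$, the infinite series in $m$, and $\bE$ used in the bare expansion; this is cleanly avoided by the product-formula route above, so I would phrase the argument directly through $\prod_k \chi(n^{-1/N}A_k^{(n)})$ and Lemma~\ref{stime-chi} rather than through termwise moment estimates.
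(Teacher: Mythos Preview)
Your proposal is correct and follows essentially the same route as the paper: rewrite $\int_0^t a(s)W_n(s)\,{\rm d}s$ as $n^{-1/N}\sum_k A_k^{(n)}\xi_k$ with $A_k^{(n)}=\int_{k/n}^t a(u)\,{\rm d}u$, factor the expectation by independence into $\prod_k \chi(n^{-1/N}A_k^{(n)})$, apply Lemma~\ref{stime-chi} to each factor, and identify the leading exponent as a Riemann sum for $\int_0^t\bigl(\int_s^t a\bigr)^N{\rm d}s$ (using that $s\mapsto\int_s^t a$ is continuous and bounded by $\|a\|_{L^1}$). The only cosmetic difference is that the paper controls the product error directly via the bound $|\mathcal R_n|\le (1+C'/n^2)^n-1\to 0$, whereas you pass through logarithms; these are equivalent once one notes each factor is $1+O(1/n)$.
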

\begin{proof}
Set $t_j:=\frac{j}{n}$, for $j=0,..., \lfloor nt\rfloor$
\begin{eqnarray*}
\bE\left[e^{\int_0^t a(s)W_n(s)ds}\right]&=&  \bE\left[e^{\sum_{j=0}^{\lfloor nt\rfloor-1} W_n(t_j)\int_{t_j}^{t_{j+1}}a(s)ds}\right]\\
&=& \bE\left[e^{\sum_{j=1}^{\lfloor nt\rfloor}\alpha^{1/N}n^{-1/N}\xi_j\int_{t_{j-1}}^{t}a(s)ds}\right]=\prod_{j=1}^{\lfloor nt\rfloor}\bE\left[   \exp\left(\alpha^{1/N}n^{-1/N}\xi_j\int_{t_{j-1}}^{t}a(s)ds)\right)\right]\\
&=&\prod_{j=1}^{\lfloor nt\rfloor}\left( e^{\frac{\alpha}{N!n}(\int_{t_{j-1}}^{t}a(s)ds)^N}+R(n,j))\right)\\
&=&e^{\sum_{j=1}^{\lfloor nt\rfloor}\frac{\alpha}{N!n}(\int_{t_{j-1}}^{t}a(s)ds)^N}+{\mathcal R}_n 
\end{eqnarray*}
where, by the estimate in Lemma \ref{stime-chi} and the boundedness of the function $s\mapsto \int_s^t a(\tau )d\tau$
\footnote{Indeed $ |\int_s^t a(\tau )d\tau|\leq \|a\|_{L^1([0,t])}$}, we have that there exists a constant $C\in \bR$ such that for all $j=1,..., \lfloor nt\rfloor$ the inequality 
 $|R(n,j)|\leq C/n^2$ holds.
Hence, consequently, 
$$|{\mathcal R}_n|\leq \left(1+\frac{C'}{n^2}\right)^n-1, $$
where $C'=Ce^{\frac{|\alpha|}{N!n}\|a\|_{L^1([0,t])}^N}$
By taking the limit for $n\to \infty $ we get the thesis.
\end{proof}

Let us consider now the initial value problem \eqref{heat-N-V} with a time dependent potential which is linear in the space variable, i.e. $V:[0,t]\times \bR\to \bR$ is of the form
$V(\tau,x)=A(\tau)x$, where $A:[0,+\infty)\to \bR$ is a continuous function. In this case the results of Theorem \ref{theor-cyl} allow us to prove the Feynman-Kac formula \eqref{Fey-Kac}.
\begin{theorem}
Let  $A:[0,+\infty)\to \bR$ be a continuous function and $f:\bR\to \bC$ the Fourier transform of a complex Borel measure on $\bR$  with compact support. Then the classical solution of the initial value problem 
\begin{equation}
\begin{aligned}
&\frac{\partial}{\partial t}u(t,x) = \frac{\alpha}{N!} \frac{\partial^N}{\partial x^N}u(t,x)+A(t)xu(t,x), \qquad &t \in [0,\infty),
\\
&u(0,x) = f(x), \qquad \phantom{\frac{\partial^N}{\partial x^N}}&x \in \bR.
\end{aligned}
\label{heat-N-V-lin}
\end{equation}
is given by
$$u(t,x)=\lim_{n\to\infty}\bE\left[f(x+W_n(t))e^{\int_0^tA(t-s)(W_n(s)+x)ds}   \right].$$
\end{theorem}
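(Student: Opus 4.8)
The plan is to reduce the statement to the scalar exponential moments already controlled by Theorem~\ref{theor-cyl}. Since $f$ is the Fourier transform of a complex Borel measure $\mu$ with compact support, write $f(y)=\int_{\bR}e^{iy\xi}\,d\mu(\xi)$. For each fixed $n$ the variable $W_n(t)$ is a finite sum, hence bounded, and $\xi$ ranges over the compact set $\mathrm{supp}\,\mu$, so Fubini's theorem gives
\[
\bE\Bigl[f(x+W_n(t))\,e^{\int_0^tA(t-s)(W_n(s)+x)\,ds}\Bigr]
= e^{x\int_0^tA(t-s)\,ds}\int_{\bR}e^{ix\xi}\,\bE\Bigl[e^{\,i\xi W_n(t)+\int_0^tA(t-s)W_n(s)\,ds}\Bigr]\,d\mu(\xi),
\]
and it suffices to analyse, for each fixed real $\xi$, the inner expectation.

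First I would evaluate the inner expectation by the argument used for Theorem~\ref{theor-cyl}, the only new feature being the weight $e^{i\xi W_n(t)}$, which is absorbed into the same summation by parts. Writing $t_j=j/n$, $M=\lfloor nt\rfloor$, $\delta_k=(\alpha/n)^{1/N}\xi_k$ so that $W_n(t_j)=\sum_{k=1}^{j}\delta_k$, a summation by parts gives the exact identity
\[
i\xi W_n(t)+\int_0^tA(t-s)W_n(s)\,ds=\sum_{k=1}^{M}\delta_k\,c_k^{(n)},\qquad
c_k^{(n)}:=i\xi+\int_{t_k}^{t}A(t-s)\,ds,
\]
with $|c_k^{(n)}|\le C_\xi:=|\xi|+\|A\|_{L^1([0,t])}$ uniformly in $k,n$. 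By independence of the $\xi_k$ and Lemma~\ref{stime-chi}, exactly as in Theorem~\ref{theor-cyl},
\[
\bE\Bigl[e^{\sum_{k=1}^{M}\delta_k c_k^{(n)}}\Bigr]=\prod_{k=1}^{M}\Bigl(e^{\frac{\alpha}{N!\,n}(c_k^{(n)})^{N}}+R(n,k)\Bigr)
=\exp\Bigl(\frac{\alpha}{N!\,n}\sum_{k=1}^{M}(c_k^{(n)})^{N}\Bigr)+\mathcal R_n,
\]
where $|R(n,k)|\le C/n^{2}$ uniformly and $|\mathcal R_n|\le(1+C'/n^{2})^{n}-1\to0$. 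Setting $P(r):=\int_0^rA(v)\,dv$ and changing variables, $\frac1n\sum_{k=1}^{M}(c_k^{(n)})^{N}=\frac1n\sum_{k=1}^{M}\bigl(i\xi+P(t-t_k)\bigr)^{N}$ is a Riemann sum, so
\[
\lim_{n\to\infty}\bE\Bigl[e^{\,i\xi W_n(t)+\int_0^tA(t-s)W_n(s)\,ds}\Bigr]
=\exp\Bigl(\frac{\alpha}{N!}\int_0^t\bigl(i\xi+P(r)\bigr)^{N}\,dr\Bigr).
\]

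The same decomposition provides the bound $\bigl|\bE[e^{\,i\xi W_n(t)+\int_0^tA(t-s)W_n(s)\,ds}]\bigr|\le e^{\frac{|\alpha|}{N!}t\,C_\xi^{N}}+\sup_m|\mathcal R_m|$, which is finite and uniform in $n$ and in $\xi\in\mathrm{supp}\,\mu$ (since $C_\xi$ is uniformly bounded there); this is the step that cannot be obtained by bounding the modulus of the random exponent directly, because $\sup_\omega|W_n(t)(\omega)|\to\infty$. With this domination and the finiteness of $\mu$, dominated convergence lets me pass the limit through $\int d\mu(\xi)$, obtaining
\[
u(t,x)=\int_{\bR}\psi(t,x,\xi)\,d\mu(\xi),\qquad
\psi(t,x,\xi):=e^{\,x(i\xi+P(t))}\exp\Bigl(\frac{\alpha}{N!}\int_0^t(i\xi+P(r))^{N}\,dr\Bigr).
\]

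It then remains to check that this $u$ is a classical solution of \eqref{heat-N-V-lin}. From $\partial_x\psi=(i\xi+P(t))\psi$ we get $\partial_x^{N}\psi=(i\xi+P(t))^{N}\psi$, while $P'=A$ gives $\partial_t\psi=\bigl(xA(t)+\tfrac{\alpha}{N!}(i\xi+P(t))^{N}\bigr)\psi$; hence $\partial_t\psi=A(t)x\,\psi+\tfrac{\alpha}{N!}\partial_x^{N}\psi$ pointwise in $\xi$. Since $\mu$ has compact support and $P$ is bounded on bounded time intervals, all the $x$- and $t$-derivatives that occur are dominated, uniformly for $(t,x)$ in compact sets, by $\mu$-integrable functions of $\xi$, so differentiation under the integral sign yields $\partial_tu=A(t)x\,u+\tfrac{\alpha}{N!}\partial_x^{N}u$, and continuity of $A$ gives continuity of $\partial_tu$. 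Finally $\psi(0,x,\xi)=e^{ix\xi}$, so $u(0,x)=\int e^{ix\xi}\,d\mu(\xi)=f(x)$, and uniqueness for this linear Cauchy problem (within the class of classical solutions of exponential growth in the space variable) identifies $u$ with the solution. The main obstacle is precisely the uniform-in-$(n,\xi)$ control of $\bigl|\bE[e^{\,i\xi W_n(t)+\int_0^tA(t-s)W_n(s)\,ds}]\bigr|$ needed to justify the dominated convergence in $\xi$; the remaining steps are bookkeeping on the computation behind Theorem~\ref{theor-cyl}.
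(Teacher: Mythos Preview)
Your proof is correct and follows essentially the same route as the paper: Fourier-represent $f$, pull out the deterministic factor $e^{x\int_0^tA(t-s)\,ds}$, compute the inner expectation by the mechanism of Theorem~\ref{theor-cyl}, pass to the limit by dominated convergence over $d\mu$, and then verify the resulting formula solves the PDE. You are in fact more careful than the paper on two points the paper leaves implicit: you spell out how the extra factor $e^{i\xi W_n(t)}$ is absorbed into the product structure (shifting each coefficient to $c_k^{(n)}=i\xi+\int_{t_k}^tA(t-s)\,ds$, which is exactly what is needed since Theorem~\ref{theor-cyl} as stated has no such factor), and you supply the uniform-in-$(n,\xi)$ bound on the inner expectation that justifies dominated convergence, whereas the paper simply asserts it.
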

\begin{proof}
By considering the function $a:[0,t]\to \bR$ defined as $a(s):=A(t-s)$, $s\in [0,t]$ and by representing the function $f:\bR\to \bC$ in the form $f(x)=\int_\bR e^{iyx}d\mu_f(y)$, with $\mu $ Borel measure on $\bR$ with compact support, we obtain
\begin{eqnarray*}
& &\lim_{n\to\infty}\bE\left[f(x+W_n(t))e^{\int_0^tA(t-s)(W_n(s)+x)ds} \right]\\
&=&\lim_{n\to\infty}\bE\left[\int_\bR e^{iyx+iyW_n(t)}e^{\int_0^tA(t-s)W_n(s)ds}e^{x\int_0^tA(t-s)ds}d\mu (y) \right]\\
&=&e^{x\int_0^tA(t-s)ds}\lim_{n\to\infty}\int_\bR e^{iyx}\bE\left[ e^{iyW_n(t)}e^{\int_0^tA(t-s)W_n(s)ds}\right]d\mu (y) \\
\end{eqnarray*}
By Theorem \ref{theor-cyl} and  dominated convergence, the latter line converges to
$$e^{x\int_0^tA(t-s)ds}\int_\bR e^{iyx}e^{\frac{\alpha}{N!}\int_0^t (iy+\int_s^t A(t-u)du)^Nds}d\mu (y),$$
which is, as one can directly verify, the classical solution of the Cauchy problem \eqref{heat-N-V-lin}.
\end{proof}

\section{Stopping times}

Now replace $t$ with a stopping time $\tau_n$ for the stochastic process $W^n$ in the above formulas, with $\tau_n$ finite a.s.\ Clearly
\begin{align*}
H^{n,k}_t := \int_0^t g(W^n(s)) \, {\rm d}(W^n(s))^k = \sum_{j=0}^{\lfloor n t \rfloor - 1} g(W^n(\tfrac{j}{n})) \left(\frac{\xi_{j+1}}{n^{1/N}}\right)^k
\end{align*}
is a martingale for $k \not= mN$. To see this calculate $\bE(H^{n,k}_t \mid \cF_s) = H^{n,k}_s$ using the fact that
$H^{n,k}_t = (H^{n,k}_t - H^{n,k}_s) + H^{n,k}_s$ and
$\bE[H^{n,k}_t - H^{n,k}_s] = 0$
due to \eqref{e.zero-mean}.
\\
Thus the stopped process $H^{n,k}_{\tau_n \wedge t}$ is a martingale, and by the Optional Stopping Theorem (also called Doob's Optional Sampling Theorem) $\bE(H^{n,k}_{\tau_n \wedge t}) = \bE(H^{n,k}_0)$. Thus
\begin{align}\label{e:183-1}
\bE \int_0^{\tau_n} g(z + W^n(s)) \, {\rm d}(W^n(s))^k = 0, \qquad \text{for $k \not= mN$.}
\end{align}

According to It\^o's formula \eqref{e:gif} we have
\begin{align*}
g(z + W^n(\tau_n)) - g(z) = \sum_{k=1}^\infty \frac{1}{k!} \int_0^{\tau_n} \partial^k g(z + W^n(s)) \, {\rm d}(W^n(s))^k.
\end{align*}
Assume that $\tau_n$ is bounded $\bP$-a.s. (for instance, $\tau_n \le T$).
Taking the expectation of both sides of the previous identity 
we get
\begin{align}
\bE[g(z + W^n(\tau_n))] - g(z) =& \bE \sum_{k=1}^\infty \frac{1}{k!} \int_0^{\tau_n} \partial^k g(z + W^n(s)) \, {\rm d}(W^n(s))^k
\nonumber 
\intertext{and since $\bE$ is a finite sum}
=& \sum_{k=1}^\infty \frac{1}{k!}  \bE \int_0^{\tau_n} \partial^k g(z + W^n(s)) \, {\rm d}(W^n(s))^k 
\nonumber
\intertext{which thanks to \eqref{e:183-1} is}
=& \sum_{k=1}^\infty \frac{1}{(kN)!}  \bE \int_0^{\tau_n} \partial^{kN} g(z + W^n(s)) \, {\rm d}(W^n(s))^{kN} 
\nonumber\\
=& \sum_{k=1}^\infty \frac{\alpha^k}{(kN)!} \frac{1}{n^{k-1}} \bE \int_0^{\tau_n} \partial^{kN} g(z + W^n(s)) \, {\rm d}s
\nonumber
\end{align}
%
%
and passing to the limit as $n \to \infty$ we get
\begin{align}
\lim_{n \to \infty} \bE[g(z + W^n(\tau_n))] - g(z) =& \lim_{n \to \infty} \sum_{h=1}^\infty \frac{\alpha^h}{(hN)!} \frac{1}{n^{h-1}} \bE \int_0^{\tau_n} \partial^{hN} g(z + W^n(s)) \, {\rm d}s 
\nonumber\\
=&  \frac{\alpha}{N!}  \lim_{n \to \infty} \bE \int_0^{\tau_n}   [\partial^{N} g(z + W^n(s))] \, {\rm d}s (\footnotemark).
\label{e:183-2}
\end{align}
\footnotetext{by using the boundedness of $\tau$ and Corollary \ref{c:163}, just like in the proof of Theorem \ref{t:main-claim}}

\begin{remark}
Formula \eqref{e:183-2} is a generalization of the fundamental theorem of calculus. For $N = 1$ and $\alpha = 1$ the $W^n$ are deterministic 
and converge to the line $W_t^\infty = t$. Then \eqref{e:183-2} reads
\begin{align*}
g(z+t) - g(z) = \int_0^t g'(z+s) \, {\rm d}s
\end{align*}
for $z+t$ being the first hitting point to the boundary along the line to the right of $z$.

When $N = 2$ the process $W^n$ converges to the 1-dimensional Wiener process $W$ and $\tau_n$ converges to the 
Wiener process hitting time $\tau$ and we have
\begin{align*}
\bE [g(z + W_\tau)] - g(z) = \frac12 \bE \int_0^\tau g''(z + W_s) \, {\rm d}s
\end{align*}
so the average value of the integral of $g''$ from $z$ to the boundary is the difference of the second antidervative averaged on the boundary and at $z$. 
Notice $W_\tau$ has only 2 values, the hitting points to the right and left of $z$, since $W$ is the 1- dimensional Wiener process. 
E.g., in the particular, symmetric case of $U$ being the disc centered at $z$ of radius $r$, we get
\begin{align*}
\frac{ [g(z+r) - g(z)] + [g(z-r) - g(z)] }{2} = \frac12 \bE \int_0^\tau g''(z + W_s) \, {\rm d}s.
\end{align*}
\end{remark}

\medskip

Now we specialize the previous formula to the case $\tau_n$ is the exit time of $W^n(t)$ from the ball $B(0,R)$.
We shall see that
an analytic function $g(z)$ is determined by the  average on the boundary of any ball $B(z,R)$. 
This is a generalization of Gauss' mean value theorem for analytic functions, a classical result in complex analysis for the case of Brownian motion.

\begin{theorem}
Let $\tau_n$ be the exit time of $W^n(t)$ from the ball $B(0,R)$.
Then
\begin{equation}
\label{e:173-1}
\lim_{n \to \infty} \tau_n = 0 \qquad \text{a.s.}
\end{equation}
\end{theorem}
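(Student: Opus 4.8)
The plan is to show that the exit time $\tau_n$ of the random walk $W^n$ from $B(0,R)$ tends to $0$ almost surely by exploiting the fact that each step of $W^n$ has size $n^{-1/N}|\alpha|^{1/N}$, but the walk typically needs only a bounded number of steps to escape a ball of fixed radius $R$ because the walk is \emph{not} diffusively spreading on the time scale $[0,1]$; rather, on that scale it behaves like a walk whose displacement after $m$ steps has typical modulus of order $m^{1/2} n^{-1/N}$, and this must reach $R$. The key observation is that $\tau_n = \sigma_n / n$ where $\sigma_n := \inf\{m \in \bN : |n^{-1/N}\sum_{k=1}^m \xi_k| > R\}$ is the number of steps, so it suffices to prove $\sigma_n = o(n)$ a.s., and in fact one expects $\sigma_n$ to be of order $n^{2/N}$, which is $o(n)$ precisely because $N > 2$.

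First I would make rigorous the heuristic already present in the introduction: the reparametrised process $\bar W^n(t) = W^n(t/n^{1-2/N})$ converges, by Donsker's theorem, to a planar Brownian motion $B$. Under this reparametrisation the exit time from $B(0,R)$ transforms as $\bar\tau_n = n^{1-2/N}\tau_n$, and $\bar\tau_n$ converges in distribution to the (a.s.\ finite, positive) exit time $\tau_B$ of $B$ from $B(0,R)$. Hence $\tau_n = n^{-(1-2/N)}\bar\tau_n$, and since $1 - 2/N > 0$ while $\bar\tau_n$ is tight (bounded in probability), we get $\tau_n \to 0$ in probability. To upgrade this to almost sure convergence, I would pass to an a.s.\ statement along the following lines: since $\tau_n$ is decreasing in a suitable coupling — or more robustly, since one can bound $\bP(\tau_n > \delta)$ summably — apply Borel–Cantelli. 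Concretely, $\bP(\tau_n > \delta) = \bP(\sigma_n > \delta n) = \bP(\max_{m \le \delta n}|n^{-1/N}\sum_{k=1}^m \xi_k| \le R)$; since $|\sum_{k=1}^{\delta n}\xi_k|$ has modulus of order $(\delta n)^{1/2}$, i.e.\ $n^{-1/N}(\delta n)^{1/2} = \delta^{1/2} n^{1/2 - 1/N} \to \infty$, a maximal inequality (Kolmogorov/Etemadi, or Lévy's inequality applied to the real and imaginary parts) gives $\bP(\tau_n > \delta) \to 0$ with a polynomial rate, enough to sum over a subsequence $n = n_j$ growing polynomially and then fill in using monotonicity of the event $\{\tau_n > \delta\}$ in $R$ together with a squeezing argument.

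The cleanest route, and the one I would actually write, is: fix $\delta > 0$; by the maximal inequality $\bP(\tau_n > \delta) \le \bP\big(|n^{-1/N}\sum_{k=1}^{\lfloor \delta n\rfloor}\xi_k| \le R\big) \le C R^2 / \bE|n^{-1/N}\sum_{k=1}^{\lfloor\delta n\rfloor}\xi_k|^2 = C R^2 n^{2/N} / (|\alpha|\lfloor \delta n\rfloor)$ — wait, this uses a reverse bound so one instead applies a concentration/anticoncentration estimate — more safely, use Lévy's inequality to get $\bP(\tau_n > \delta) \le 2\,\bP\big(\mathrm{Re}\,[n^{-1/N}\sum_{k=1}^{\lfloor\delta n\rfloor}\xi_k] \le R\big)$ and then a central limit estimate showing this real partial sum, once divided by its standard deviation $\asymp \delta^{1/2}n^{1/2-1/N}$, has a bounded density, so the probability of lying in an interval of length $2R$ is $O(R\,\delta^{-1/2} n^{-(1/2-1/N)})$. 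Summing along $n_j = j^{p}$ with $p > 1/(1/2 - 1/N)$ and interpolating gives a.s.\ convergence to $0$ on that subsequence, and monotone control between consecutive $n_j$ finishes it. The main obstacle I anticipate is the a.s.\ upgrade: the $\tau_n$ are built from \emph{different} i.i.d.\ sequences for each $n$ (no natural coupling making $\tau_n$ monotone), so one cannot simply invoke monotone convergence and must instead produce summable tail bounds uniform enough to apply Borel–Cantelli along a polynomial subsequence and then control the gaps — handling those gaps, since a coarser walk does not obviously exit later than a finer one, is the delicate point and will likely require either a uniform-in-$n$ maximal inequality over the block $[\lfloor\delta n_j\rfloor, \lfloor\delta n_{j+1}\rfloor]$ or a direct moment bound on $\sup_{n_j \le n \le n_{j+1}}\tau_n$.
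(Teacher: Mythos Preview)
Your route is genuinely different from the paper's, and considerably more involved. The paper observes that all the $W^n$ are built from the \emph{same} i.i.d.\ sequence $\{\xi_k\}$: writing $S(m)=\sum_{k=1}^m\xi_k$, one has $\tau_n = T_{A_n}/n$ where $T_{A_n}=\inf\{m:|S(m)|>Rn^{1/N}\}$. Then $|S(m)|^2-|\alpha|^{2/N}m$ is a martingale, and optional stopping at $T_{A_n}$ together with $Rn^{1/N}\le |S(T_{A_n})|\le Rn^{1/N}+|\alpha|^{1/N}$ gives $\bE[T_{A_n}]\asymp n^{2/N}$, hence $\bE[\tau_n]\asymp n^{2/N-1}\to 0$. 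That is the whole argument in the paper; there is no Donsker, no maximal inequality, no local CLT. (Strictly speaking the paper stops at $\bE[\tau_n]\to 0$ and declares the a.s.\ statement proved, so the very issue you spend most of your proposal on is not addressed there either.)

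Your main misconception is the sentence ``the $\tau_n$ are built from different i.i.d.\ sequences for each $n$''. They are not: $W^n(t)=n^{-1/N}S(\lfloor nt\rfloor)$ for one fixed sequence. This dissolves the obstacle you flag at the end. Since the threshold $Rn^{1/N}$ is increasing in $n$, the hitting times $T_{A_n}$ are nondecreasing in $n$; thus for $n_j\le n\le n_{j+1}$ one has $\tau_n=T_{A_n}/n\le T_{A_{n_{j+1}}}/n_j=(n_{j+1}/n_j)\,\tau_{n_{j+1}}$, and choosing $n_j=2^j$ makes the ratio bounded. So a.s.\ convergence along a geometric subsequence (obtained, say, from Markov's inequality $\bP(\tau_{n_j}>\delta)\le \delta^{-1}\bE[\tau_{n_j}]\lesssim \delta^{-1}2^{-j(1-2/N)}$, which \emph{is} summable) immediately transfers to the full sequence. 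With this correction your programme works, but you can replace the Donsker/anticoncentration apparatus by the one-line optional stopping computation above.
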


\begin{proof}
%
Let $S(t)$, $t \in \bN$, be the random walk defined by the $\xi_j$'s: $\displaystyle S(t) = \sum_{j=1}^t \xi_j$.
It is immediate to see that $S(t)$ is a martingale as well as $|S(t)|^2 - \alpha t$:
\begin{align*}
\bE[|S(t+1)|^2 - |S(t)|^2] = \bE \left[ \sum_{i, j=1}^{t+1} \xi_{i} \bar \xi_{j} - \sum_{i, j=1}^t \xi_{i} \bar\xi_{j} \right]
= \bE \left[|\xi_{t+1}|^2 + \sum_{i=1}^{t}  \xi_{i} \bar \xi_{t+1} + \sum_{j=1}^{t}  \xi_{t+1} \bar \xi_{j}  \right] = \bE[|\xi_{t+1}|^2]
\end{align*}
where in the last equality we used the independence of $\xi_i$'s.
Let $A_n = \{z \in \bC \ \text{s. th.}\ |z| \le R n^{1/N}\}$ and
\begin{align*}
T_{A_n} = \inf \{t \ge 0  \ \text{s. th.}\ S(t) \not\in A_n\}.
\end{align*}
It is $\bP(T_{A_n} < \infty) = 1$ (see e.g.\ [Lawler 2010]); then the stopped martingale
$M^n(t) = |S(t \wedge T_{A_n})|^2  - (t \wedge T_{A_n})$
satisfies
\begin{align*}
0 = \bE[M^n(0)] = \lim_{t \to \infty} \bE \left[ |S(t \wedge T_{A_n})|^2  - (t \wedge T_{A_n}) \right]
= \bE \left[  |S( T_{A_n})|^2  - ( T_{A_n}) \right]
\end{align*}
and using the estimate $n^{2/N} R^2 \le |S(T_{A_n})|^2 \le n^{2/N} R^2 + 1$ it follows that
\begin{align*}
n^{2/N} R^2 \le \bE[T_{A_n}] \le n^{2/N} R^2 + 1.
\end{align*}
Notice that
\begin{align*}
\tau_n = \inf \left\{ s = j/n \ \text{s. th.}\ \frac{1}{n^{1/N}} S(j) \not\in B(0,R) \right\}
\end{align*}
i.e., $\tau_n = \frac{1}{n} T_{A_n}$, so we finally get
\begin{align*}
n^{\frac2N-1} R^2 \le \bE[\tau_n] \le n^{\frac2N-1} R^2 + \frac1n
\end{align*}
which converges to 0 as $n \to \infty$. Since $\tau_n$ is non-negative, this concludes the proof.
\end{proof}

Finally, we have a stochastic characterization of higher order complex derivatives.

\begin{theorem}
Assume that $g$ is an analytic function of exponential type $c$. Define $\tau_n$ be the exit time of $W^n$ from $B(0,R)$.
Then
\begin{align*}
\frac{\alpha}{N!} g^{(N)}(z) = \lim_{n \to \infty} \bE \left[ \frac{1}{\tau_n} [g(z + W^n(\tau_n)) - g(z)] \right].
\end{align*}
\end{theorem}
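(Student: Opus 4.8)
The plan is to start from the pathwise It\^o formula \eqref{e:gif} evaluated at the stopping time $\tau_n$ (which is a.s.\ finite by the previous theorem), divide by $\tau_n$, and take expectations. Since every $\xi_j$ is an $N$-th root of $\alpha$ one has $\xi_j^{mN}=\alpha^m$ \emph{identically}, so that $\int_0^{\tau_n}\partial^{mN}g(z+W^n(s))\,{\rm d}(W^n(s))^{mN}=\alpha^m n^{-(m-1)}\int_0^{\tau_n}\partial^{mN}g(z+W^n(s))\,{\rm d}s$, and dividing \eqref{e:gif} by $\tau_n$ gives, pathwise,
\begin{equation*}
\frac{g(z+W^n(\tau_n))-g(z)}{\tau_n}=\frac{\alpha}{N!}\,\frac{1}{\tau_n}\int_0^{\tau_n}g^{(N)}(z+W^n(s))\,{\rm d}s+R^{(1)}_n+R^{(2)}_n,
\end{equation*}
where $R^{(1)}_n=\sum_{m\ge2}\frac{\alpha^m}{(mN)!\,n^{m-1}}\frac{1}{\tau_n}\int_0^{\tau_n}g^{(mN)}(z+W^n(s))\,{\rm d}s$ collects the higher diffusive terms and $R^{(2)}_n=\sum_{k\ge1,\ N\nmid k}\frac{1}{k!}\frac{1}{\tau_n}\int_0^{\tau_n}g^{(k)}(z+W^n(s))\,{\rm d}(W^n(s))^k$ the ``stochastic'' ones.

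The term $R^{(1)}_n$ is disposed of at once: for $s<\tau_n$ we have $|W^n(s)|\le R$, and by exponential type $\sup_{|w|\le R}|g^{(mN)}(z+w)|\le C_z(c+\varepsilon)^{mN}$; since $\tfrac1{\tau_n}\int_0^{\tau_n}(\cdot)\,{\rm d}s$ is an average, $|R^{(1)}_n|\le\frac1n\sum_{m\ge2}\frac{|\alpha|^m}{(mN)!}C_z(c+\varepsilon)^{mN}=O(1/n)$ deterministically, so $\bE[R^{(1)}_n]\to0$. For the leading term I would Taylor–expand $g^{(N)}(z+W^n(s))=\sum_{l\ge0}\frac{g^{(N+l)}(z)}{l!}W^n(s)^l$ and use the \emph{exact} $N$-fold rotational symmetry of the construction: the law of $\bigl(W^n(\cdot),\tau_n\bigr)$ is invariant under $\xi_j\mapsto e^{2\pi i/N}\xi_j$, i.e.\ under $W^n\mapsto e^{2\pi i/N}W^n$ with $\tau_n$ unchanged, so $\bE\bigl[\tfrac1{\tau_n}\int_0^{\tau_n}W^n(s)^l\,{\rm d}s\bigr]=0$ whenever $N\nmid l$. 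The surviving terms $l=pN$, $p\ge1$, are killed by the scaling: $\tfrac1{\tau_n}\int_0^{\tau_n}W^n(s)^{pN}\,{\rm d}s=n^{-p}\tfrac1{T_n}\sum_{j<T_n}S(j)^{pN}$ with $T_n=\lfloor n\tau_n\rfloor$ and $S$ as in the proof of the previous theorem, and using the moment asymptotics $\bE[S(j)^{pN}]\sim\alpha^p\frac{(pN)!}{p!}j^p$ (the analogue of Theorem \ref{t:unif} for $S$, with the compensator computed recursively) together with $\bE[T_n^{p}]=O(n^{2p/N})$ — which follows from $\bE[\tau_n]\sim R^2n^{2/N-1}$ — one gets $\bE\bigl[\tfrac1{\tau_n}\int_0^{\tau_n}W^n(s)^{pN}\,{\rm d}s\bigr]=O\!\left(n^{p(2/N-1)}\right)\to0$ since $N>2$. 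Hence $\bE\bigl[\tfrac1{\tau_n}\int_0^{\tau_n}g^{(N)}(z+W^n(s))\,{\rm d}s\bigr]\to g^{(N)}(z)$; this is precisely the mean-value property of the harmonic function $g^{(N)}$, read off the asymptotically rotationally symmetric occupation measure of $W^n$ on $[0,\tau_n]$.

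The crux is $\bE[R^{(2)}_n]\to0$, i.e.\ that dividing by $\tau_n$ does not spoil the vanishing \eqref{e:183-1}. For each $k$ with $N\nmid k$ the process $H^{n,k}_t=\int_0^t g^{(k)}(z+W^n(s))\,{\rm d}(W^n(s))^k$ is a martingale because $\bE[\xi^k]=0$; after Taylor–expanding $g^{(k)}(z+W^n(s))$ and using the same $N$-fold symmetry one is reduced to estimating $\bE\bigl[\tfrac1{T_n}\sum_{j<T_n}S(j)^{l}\xi_{j+1}^{k}\bigr]$ only for $N\mid(k+l)$, where $P_j:=\sum_{i<j}S(i)^l\xi_{i+1}^k$ is itself a martingale. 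Writing $\tfrac1{T_n}=\sum_{i\ge T_n}\tfrac1{i(i+1)}$ (discrete summation by parts) and using the optional stopping theorem one gets $\bE[P_{T_n}/T_n]=\sum_{i\ge1}\tfrac{\bE[P_{T_n}\mathbf 1_{T_n\le i}]}{i(i+1)}=-\sum_{i\ge1}\tfrac{\bE[P_{i}\mathbf 1_{T_n> i}]}{i(i+1)}$. The quantity $\bE[P_i\mathbf 1_{T_n>i}]$, which would vanish if $\{T_n>i\}$ were independent of $P_i$, is in fact small because the correlation between the martingale increment at step $i$ and the survival event $\{T_n>i\}$ is localized in the thin shell of $B(0,R)$ near the boundary; quantifying this localization via the optional stopping theorem, Doob's maximal inequality applied to $|S(\cdot)|$, and the estimate $\bE[T_n]\sim R^2n^{2/N}$ yields the required decay, uniformly enough in $(k,l)$ (the exponential type of $g$ controlling the Taylor coefficients) to sum over $k$ and pass to the limit. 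I expect this boundary-localization estimate to be the main obstacle; everything else is bookkeeping with the exponential-type bounds and with the estimates on $\tau_n$ and on the moments of $W^n$ already established.
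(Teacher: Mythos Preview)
Your pathwise decomposition and the disposal of $R^{(1)}_n$ via the deterministic bound $|W^n(s)|\le R$ on $[0,\tau_n)$ are correct, and you are right that the division by the random $\tau_n$ is the real difficulty: the zero-mean identity \eqref{e:183-1} does not survive it automatically. Your summation-by-parts identity $\bE[P_{T_n}/T_n]=-\sum_{i\ge1}\tfrac{\bE[P_i\mathbf 1_{T_n>i}]}{i(i+1)}$ is also correct and a reasonable entry point for $R^{(2)}_n$.

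The gap is in the leading term, at the step where you claim $\bE\bigl[\tfrac1{\tau_n}\int_0^{\tau_n}W^n(s)^{pN}\,{\rm d}s\bigr]=O(n^{p(2/N-1)})$ for $p\ge1$. You obtain this by replacing $S(j)^{pN}$ with its unconditional mean $\sim C_pj^p$ and then invoking $\bE[T_n^{p}]=O(n^{2p/N})$. Neither substitution is justified: the summation range $[0,T_n)$ is random and correlated with $S$, so one cannot swap $S(j)^{pN}$ for its mean, and higher moments of $T_n$ do not follow from the first. More tellingly, $\tfrac1{\tau_n}\int_0^{\tau_n}W^n(s)^{pN}\,{\rm d}s$ is bounded pathwise by $R^{pN}$ and, under the Donsker rescaling recalled in the introduction, converges in law to $\tfrac1{\tau_B}\int_0^{\tau_B}B(u)^{pN}\,{\rm d}u$ for planar Brownian motion $B$ stopped on $\partial B(0,R)$; the expectation of this limit vanishes only by the \emph{continuous} rotational symmetry of $B$, which your discrete $N$-fold symmetry cannot detect. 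So the expectation does tend to zero, but not by the mechanism or at the rate you assert. A rigorous argument along your lines would need either an invariance principle up to the exit time, or a martingale compensation of $S(j)^{pN}$ followed by an optional-stopping estimate on the compensated part --- essentially the same missing ingredient as in your unfinished ``boundary-localization'' bound for $R^{(2)}_n$.

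The paper proceeds quite differently and never Taylor-expands in $W^n(s)$: it conditions on $\tau_n$, writes $\bE\bigl[\tfrac1{\tau_n}\int_0^{\tau_n}(\cdots)\bigr]=\int_0^T\tfrac1t\,\bE\bigl[\int_0^t(\cdots)\bigr]\,\bP(\tau_n\in{\rm d}t)$ with the \emph{unconditional} inner expectation, inserts the expansion $\bE[g^{(kN)}(W^n(s))]=\gamma_k(s)+O(1/n)$ from Theorem~\ref{t:163}, and then applies the integral mean-value theorem together with $\tau_n\to0$ to collapse the surviving $k=1$ term to $\gamma_1(0)=g^{(N)}(z)$. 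This route is much shorter and sidesteps your $R^{(2)}_n$ estimate entirely --- though the passage from the conditional law given $\tau_n=t$ to the unconditional one is exactly the point you flag as ``the crux'', and the paper treats it lightly.
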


\begin{proof}
With no loss of generality we set $z=0$.
As in the proof of formula \eqref{e:183-2} we calculate
\begin{align*}
\bE \left[ \frac{1}{\tau_n} [g(W^n(\tau_n)) - g(0)] \right] 
&=
\sum_{k=1}^\infty \frac{1}{k!} \bE \left[ \frac{1}{\tau_n} \int_0^{\tau_n} g^{(k)}(W^n(s)) \, {\rm d}(W^n(s))^k  \right] 
\intertext{since $\tau_n \le T$ we take conditional expectation to get}
&= 
\sum_{k=1}^\infty \frac{1}{k!} \int_0^T \bE \left[ \frac{1}{t} \int_0^{t} g^{(k)}(W^n(s)) \, {\rm d}(W^n(s))^k  \right] \, \bP(\tau_n \in {\rm d}t)
\\
&= 
\sum_{k=1}^\infty \frac{\alpha^k}{(kN)!} \frac{1}{n^{k-1}} \int_0^T  \left[ \frac{1}{t} \int_0^{t} \bE[g^{(kN)}(W^n(s))] \, {\rm d}s  \right] \, \bP(\tau_n \in {\rm d}t).
\end{align*}
From Theorem \ref{t:163} we know that
\begin{align*}
&\bE[g^{(kN)}(W^n(s))] = \gamma_{k}(s) + R_k(n,s), 
\\
&\gamma_k(s) = \sum_{h=0}^\infty \frac{\partial^{(h+k)N}g(0)}{h!} \left(\frac{\alpha s}{N!} \right)^h \quad \text{and $R_k(n,s) \le \frac1n C(\alpha,k,T)$.}
\end{align*}
By using the bound $|\partial^{(n)}g(0)| \le (c+\varepsilon)^{n}$ for some $\varepsilon > 0$, where $c$ is the exponential type of $g$,
we get also
\begin{align*}
|\gamma_k(s)| \le (c+\varepsilon)^{k N} \exp\left(\frac{\alpha \, s \, (c+\varepsilon)^N}{N!}\right).
\end{align*}

Then we get
\begin{align*}
\bE \left[ \frac{1}{\tau_n} [g(W^n(\tau_n)) - g(0)] \right] 
&=
\sum_{k=1}^\infty \frac{\alpha^k}{(kN)!} \frac{1}{n^{k-1}} \int_0^T  \left[ \frac{1}{t} \int_0^{t} \gamma_{k}(s) + R_k(n,s) \, {\rm d}s  \right] \, \bP(\tau_n \in {\rm d}t)
\\
&=
\sum_{k=1}^\infty \frac{\alpha^k}{(kN)!} \frac{1}{n^{k-1}} \int_0^T  \left[ \frac{1}{t} \int_0^{t} \gamma_{k}(s)  \, {\rm d}s  \right] \, \bP(\tau_n \in {\rm d}t)
+
\sum_{k=1}^\infty \frac{\alpha^k}{(kN)!} \frac{1}{n^{k-1}} \int_0^T  \left[ \frac{1}{t} \int_0^{t} R_k(n,s) \, {\rm d}s  \right] \, \bP(\tau_n \in {\rm d}t)
\end{align*}
By using the integral mean value theorem and the estimate on $R_k$ we get
\begin{align*}
\bE \left[ \frac{1}{\tau_n} [g(W^n(\tau_n)) - g(0)] \right] 
&\approx
\sum_{k=1}^\infty \frac{\alpha^k}{(kN)!} \frac{1}{n^{k-1}} \bE [\gamma_{k}(\varepsilon \tau_n)]
+ \frac1n
\sum_{k=1}^\infty \frac{\alpha^k}{(kN)!} \frac{1}{n^{k-1}} C(\alpha,k,T) 
\end{align*}
for some $\varepsilon \in (0,1)$.
Recall that $C(\alpha,k,T) \approx c_1 \left( \frac{c_2 k}{\log(1+kN)} \right)^{kN}$, so that the second series in previous formula converges uniformly in $n$;
we write again
\begin{align}\label{e:0502-1}
\bE \left[ \frac{1}{\tau_n} [g(W^n(\tau_n)) - g(0)] \right] 
&\approx
\frac{\alpha}{N!}  \bE [\gamma_{1}(\varepsilon \tau_n)]
+
\frac1n \sum_{k=0}^\infty \frac{\alpha^{k+2}}{((k+2)N)!} \frac{1}{n^{k}} \bE [\gamma_{k+2}(\varepsilon \tau_n)]
+ \frac1n
 C(\alpha,T). 
\end{align}
We notice that
\begin{align*}
\sum_{k=0}^\infty \left| \frac{\alpha^{k+2}}{((k+2)N)!} \frac{1}{n^{k}} \bE [\gamma_{k+2}(\varepsilon \tau_n)] \right|
\le
\sum_{k=0}^\infty \frac{\alpha^{k+2}}{((k+2)N)!} (c+\varepsilon)^{(k+2)N} \exp\left(\frac{\alpha \, T \, (c+\varepsilon)^N}{N!}\right)
< + \infty
\end{align*}
hence we can pass to the limit for $n \to \infty$ in \eqref{e:0502-1}, recalling that $\tau_n \to 0$, and we get the thesis.

\end{proof}

\section*{Acknowledgments} 

The financial support of the Fulbright Scholar Program is gratefully acknowledged.
The second author is grateful to the hospitality of the Math Department of the University of Trento.
Thanks also goes to Axel Boldt for useful discussions.



\begin{thebibliography}{10}

\bibitem{AlMa14}
S.~Albeverio and S.~Mazzucchi.
\newblock A unified approach to infinite dimensional integration.
\newblock   arXiv:1411.2853 [math.PR] (2014).

\bibitem{Albeverio2015}
S.~Albeverio and S.~Mazzucchi.
\newblock Infinite dimensional oscillatory integrals as projective systems of functionals. 
\newblock {\em Journal of the Mathematical Society of Japan } Vol. 67, No. 4 (October, 2015), p. 1295 - 1316.


\bibitem{Allouba2002}
H.~Allouba.
\newblock Brownian-time processes: the {PDE} connection {II} {a}nd the
  corresponding {F}eynman-{K}ac formula.
\newblock {\em Trans. Amer. Math. Soc.}, 354(11):4627--4637 (electronic), 2002.



\bibitem{Beghin2000}
L.~Beghin, K.~J. Hochberg, and E.~Orsingher.
\newblock Conditional maximal distributions of processes related to
  higher-order heat-type equations.
\newblock {\em Stochastic Process. Appl.}, 85(2):209--223, 2000.

\bibitem{BeTa}
D. Berend and T. Tassa.  {\em Improved bounds on Bell numbers and on moments of sums of random variables}. Probability and Mathematical Statistics 30 (2): 185--205, 2010.


\bibitem{Bil}
P. Billingsley.
\newblock Convergence of probability measures. Second edition.
\newblock Wiley Series in Probability and Statistics: Probability and Statistics. A Wiley-Interscience Publication. John Wiley \& Sons, Inc., New York, 1999. 

\bibitem{Boch}
S.~Bochner.
\newblock {\em Harmonic analysis and the theory of probability}.
\newblock University of California Press, Berkeley and Los Angeles, 1955.

\bibitem{Bonaccorsi2015}
S.~Bonaccorsi and S.~Mazzucchi.
\newblock High Order Heat-Type Equations And Random Walks On The Complex Plane.
\newblock {\em  {S}tochastic {P}rocesses and Applications}, 125(2):797--818, 2015.

\bibitem{Bur}
K.~Burdzy.
\newblock Some path properties of iterated {B}rownian motion.
\newblock In {\em Seminar on {S}tochastic {P}rocesses, 1992 ({S}eattle, {WA},
  1992)}, volume~33 of {\em Progr. Probab.}, pages 67--87. Birkh\"auser Boston,
  Boston, MA, 1993.
%

%
%

%
%
\bibitem{Burdzy1995}
K.~Burdzy and A.~M{{\setbox0=\hbox{a}{\ooalign{\hidewidth
 \lower1.5ex\hbox{`}\hidewidth\crcr\unhbox0}}}}drecki.
\newblock An asymptotically {$4$}-stable process.
\newblock In {\em Proceedings of the {C}onference in {H}onor of {J}ean-{P}ierre
  {K}ahane ({O}rsay, 1993)}, number Special Issue, pages 97--117, 1995.
%
\bibitem{BurMan96}
K.~Burdzy and A.~M{{\setbox0=\hbox{a}{\ooalign{\hidewidth
  \lower1.5ex\hbox{`}\hidewidth\crcr\unhbox0}}}}drecki.
\newblock It\^o formula for an asymptotically {$4$}-stable process.
\newblock {\em Ann. Appl. Probab.}, 6(1):200--217, 1996.

%
%
%
%
\bibitem{Carmichael1934}
R.~D.~Carmichael.
\newblock Functions of exponential type. 
\newblock{\em Bull. Amer. Math. Soc.}, 40(4):241-261, 1934.


\bibitem{Fu}
T.~Funaki.
\newblock Probabilistic construction of the solution of some higher order
  parabolic differential equation.
\newblock {\em Proc. Japan Acad. Ser. A Math. Sci.}, 55(5):176--179, 1979.
%
%
%
\bibitem{Hoc78}
K.~J. Hochberg.
\newblock A signed measure on path space related to {W}iener measure.
\newblock {\em Ann. Probab.}, 6(3):433--458, 1978.

\bibitem{Hoc79}
K.~J. Hochberg.
\newblock Central limit theorem for signed distributions.
\newblock {\em Proc. Amer. Math. Soc.}, 79(2):298--302, 1980.

\bibitem{HocOr94}
K.~J. Hochberg and E.~Orsingher.
\newblock The arc-sine law and its analogs for processes governed by signed and
 complex measures.
\newblock {\em Stochastic Process. Appl.}, 52(2):273--292, 1994.
%
\bibitem{HocOr96}
K.~J. Hochberg and E.~Orsingher.
\newblock Composition of stochastic processes governed by higher-order
  parabolic and hyperbolic equations.
\newblock {\em J. Theoret. Probab.}, 9(2):511--532, 1996.
%
%
%
%
\bibitem{Kry}
V.~J. Krylov.
\newblock Some properties of the distribution corresponding to the equation
  {$\partial u/\partial t=(-1)^{q+1} \partial ^{2q}u/\partial x^{2q}$}.
\newblock {\em Soviet Math. Dokl.}, 1:760--763, 1960.
%
%
\bibitem{Lachal}
A.  Lachal,
\newblock  Distributions of sojourn time, maximum and minimum for pseudo-processes governed by higher-order heat-type equations. {\em Electron. J. Probab.} {\bf 8} (2003), no. 20, 1-53.


\bibitem{Lawler2010}
G. F. Lawler. \newblock{\em Random walk and the heat equation}.
\newblock American Mathematical Society, 2010.
Student Mathematical Library 055

\bibitem{Leandre}
R.~L{\'e}andre.
\newblock Stochastic analysis for a non-Markovian generator: an introduction. 
\newblock {\em Russ. J. Math. Phys.} 22 (2015), no. 1, 39–52. 


\bibitem{LevLyo}
D.~Levin and T.~Lyons.
\newblock A signed measure on rough paths associated to a {PDE} of high order:
  results and conjectures.
\newblock {\em Rev. Mat. Iberoam.}, 25(3):971--994, 2009.



%
\bibitem{MadRyb93}
A.~M{{\setbox0=\hbox{a}{\ooalign{\hidewidth
 \lower1.5ex\hbox{`}\hidewidth\crcr\unhbox0}}}}drecki. and M.~Rybaczuk.
\newblock New {F}eynman-{K}ac type formula.
\newblock {\em Rep. Math. Phys.}, 32(3):301--327, 1993.

\bibitem{Ma14}
 S.~Mazzucchi.
\newblock Infinite dimensional oscillatory integrals with polynomial phase and applications to high order heat-type equations. 
\newblock arXiv:1405.4501 [math.FA] (2014).


%
%
%

\bibitem{Ni}
K.~Nishioka.
\newblock Boundary conditions for one-dimensional biharmonic pseudo process.
\newblock {\em Electron. J. Probab.}, 6:no.\ 13, 27 pp. (electronic), 2001.

\bibitem{OrDOv}
E.~Orsingher and M.~D'Ovidio.
\newblock {\em Probabilistic representation of fundamental solutions to $\frac{\partial u}{\partial t}=k_m\frac{\partial ^m}{\partial x^m}$.}
\newblock Electron. Commun. Probab. 17 (2012), no. 1885, 1-12.

\bibitem{OrZha}
E.~Orsingher and X.~Zhao.
\newblock Iterated processes and their applications to higher order
 differential equations.
\newblock {\em Acta Math. Sin. (Engl. Ser.)}, 15(2):173--180, 1999.


\bibitem{Ru} W. Rudin,
\newblock {\it Real and complex analysis.}
\newblock McGraw Hill, 1987.


\bibitem{SmoFa}
 N. V. Smorodina and M.M. Faddeev.
\newblock The L\'evy-Khinchin representation of the one class of signed stable measures and some its applications.
\newblock  {\em Acta Appl. Math.} 110 (2010), no. 3, 1289–1308. 

%
\bibitem{Thomas}
E.~G.~F. Thomas.
\newblock Projective limits of complex measures and martingale convergence.
\newblock {\em Probab. Theory Related Fields}, 119(4):579--588, 2001.

\end{thebibliography}

\end{document}